\newtheorem{thm}[subsection]{Theorem}
\newtheorem{lemma}[subsection]{Lemma}
\newtheorem{pro}[subsection]{Proposition}
\newtheorem{cor}[subsection]{Corollary}
\newtheorem{rk}[subsection]{Remark}
\numberwithin{equation}{section} \setcounter{tocdepth}{1}
\newcommand{\G}{{\mathcal G}}
\def\cb{{\mathcal B}}
\newcommand{\w}{{\bf w}}
\newcommand{\s}{{\sigma}}
\newcommand{\bea}{\begin{eqnarray}}
\newcommand{\eea}{\end{eqnarray}}
\newcommand{\Z}{\mathbb{Z}}
\newcommand{\Q}{\mathbb{Q}}
\def\ce{{\mathcal E}}
\def\cg{{\mathcal G}}
\def\bn{{\mathbb N}}
\def\bq{{\mathbb Q}}
  \def\G{\Gamma}
\def\k{\kappa}
\def\m{\mu}
\def\s{\sigma} 
\def\t{\theta}
\def\w{\omega} \def\Om{\Omega}
\def\h{{\mathbf{h}}}
\def\xb{{\mathbf{x}}}
\begin{document}

\title[$p$-adic Potts-Bethe mapping]
{Chaotic behavior of the $p$-adic Potts-Bethe mapping II}

\author{Farrukh Mukhamedov}
\address{Farrukh Mukhamedov\\
 Department of Mathematical Sciences\\
College of Science, The United Arab Emirates University\\
P.O. Box, 15551, Al Ain\\
Abu Dhabi, UAE} \email{{\tt far75m@gmail.com} {\tt
farrukh.m@uaeu.ac.ae}}

\author{ Otabek Khakimov}
\address{ Otabek Khakimov\\
 Department of Mathematical Sciences\\
College of Science, The United Arab Emirates University\\
P.O. Box, 15551, Al Ain\\
Abu Dhabi, UAE} \email{{\tt hakimovo@mail.ru} {\tt
otabek.k@uaeu.ac.ae}}

\begin{abstract}
In our previous investigations, we have developed the
renormalization group method to $p$-adic $q$-state Potts model on the Cayley tree of order $k$.
This method is closely related to the examination of dynamical behavior of the $p$-adic Potts-Bethe
mapping which depends on parameters $q,k$.  In \cite{MFKh18} we have considered the case when $q$ is not divisible by $p$, and under some conditions
it was established that  the mapping is conjugate to the full shift.  The present paper is a continuation of the mentioned paper, but here we investigate the case when $q$ is divisible by $p$ and  $k$ is arbitrary. We are able to fully describe the dynamical behavior of the $p$-adic Potts-Bethe mapping by means of Markov partition. Moreover, the existence of Julia set is established, over which the mapping enables a chaotic behavior.   We point out that
a similar result is not known in the case of real numbers
(with rigorous proofs).
 \vskip 0.3cm \noindent {\it
Mathematics Subject Classification}: 37B05, 37B10,12J12, 39A70\\
{\it Key words}: $p$-adic numbers, Potts-Bethe mapping, attraction, repeller, chaos,
shift.
\end{abstract}

\maketitle

\section{Introduction}

The present paper is a continuation of \cite{MFKh18}, where we have started to
investigate chaotic behavior of the Potts-Bethe mapping over the $p$-adic field (here $p$ is some prime number). Note that the mapping is governed by
\begin{equation}\label{func}
f_{\theta,q,k}(x)=\left(\frac{\theta x+q-1}{x+\theta+q-2}\right)^k,
\end{equation}
where $k,q\in\mathbb N$ and $\theta\in B_1(1)$.
In the mentioned paper, we have considered the case when $q$ is not divisible by $p$, i.e. $|q|_p=1$. In that setting, under some conditions  we were able to prove that $f_{\theta,q,k}$ is conjugate to the full shift. In the current paper, we are going to study the same Potts-Bethe mapping when $q$ is divisible by $p$, i.e. $|q|_p<1$. It is known that
the thermodynamic behavior of the central site of the Potts model with nearest-neighbor interactions
on a Cayley tree is reduced to the recursive system which is given by \eqref{func}.  The existence of at least two to non-trivial $p$-adic Gibbs measures indicates that the phase transition may exist. This is closely connected to the chaotic behavior of the
associated dynamical system\cite{FTC,Monr,M13,M15}. Therefore, it is important to investigate chaotic properties of \eqref{func}.

We stress that the Potts-Ising mapping is a particular case of the Potts-Bethe mapping, which can be obtained from \eqref{func} by putting $q=2$. Recently, in \cite{MAD17,MFKhO_ising}  under some condition, a Julia set of the  Potts-Ising mapping have been
described, and it was shown this mapping is conjugate to the full shift. Therefore, it is natural to consider the
the Potts-Bethe mapping for $q\geq3$ with $|q|_p<1$ and $k\geq 2$.
In \cite{RKh} all fixed points of $f_{\theta,q,k}$ have been found when $k=2$ and $|q|_p<1$.
Then, using these fixed points in \cite{MFKh161,MFKh16} the
dynamics of \eqref{func}
whenever $k=2$ and $|q|_p<1$ has been investigated. Recently in \cite{ALS17,SA15} the Potts-Bethe mapping has been studied at $k=3$ with $|q|_p<1$. In the present paper, we are going to consider a more general case, i.e. arbitrary $k$ and $|q|_p<1$.  To formulate our main result let  us recall some necessary notions.

It is easy to notice that the function \eqref{func} is defined on $\mathbb Q_p\setminus\{x^{(\infty)}\}$,
where $x^{(\infty)}=2-q-\theta$. For the sake of convenience,
we write $Dom(f_{\theta,q,k}):=\mathbb Q_p\setminus\{x^{(\infty)}\}$.
Let us denote
$$
\mathcal P_{x^{(\infty)}}=\bigcup_{n=1}^\infty f_{\theta,q,k}^{-n}(x^{(\infty)}).
$$

On can see that that the set $\mathcal P_{x^{(\infty)}}$ is at most countable, and could be empty for some $k,q$ and
$\theta$ (see Section 3). If it is not an empty set, then for any $x_0\in\mathcal P_{x^{(\infty)}}$
there exists a $n\geq1$ such that after $n$-times we will "lost" that point.


Let $x^{(0)}$ be a fixed point of an analytic function $f$ and
$$
\lambda=\frac{d}{d x}f(x^{(0)}).
$$
The fixed point $x^{(0)}$ is called {\it attractive} if
$0<|\lambda|_p<1$, {\it indifferent} if $|\lambda|_p=1$, and {\it
repelling} if $|\lambda|_p>1$.

For an attractive point $x^{(0)}$ of the function $f$,  its the basin of attraction is defined by
$$
A(x^{(0)})=\{x\in\mathbb Q_p: \lim_{n\to\infty}f^n(x)=x^{(0)}\}
$$
where $f^n=\underbrace{f\circ f\circ\dots\circ f}_n$.

The main result of the present paper is given in the following theorem.

\begin{thm}\label{thm_mainasos}
Let $p\geq3$ and $|q|_p<1$, $|\theta-1|_p<1$. Then the dynamical structure of the system
$(\mathbb Q_p, f_{\theta,q,k})$ is described as follows:\\
\begin{enumerate}
\item[$(A)$.] If $|k|_p\leq|q+\theta-1|_p$ then $Fix(f_{\theta,q,k})=\{x_0^*\}$ and
$$
A(x_0^*)=Dom(f_{\theta,q,k}).
$$
\item[$(B)$.] Assume that
$|k|_p>|q+\theta-1|_p$ and $|\theta-1|_p<|q^2|_p$. Then there exists a non empty
set $J_{f_{\theta,q,k}}\subset Dom(f_{\theta,q,k})\setminus\mathcal P_{x^{(\infty)}}$ which is invariant w.r.t
$f_{\theta,q,k}$
and
$$
A(x_0^*)=Dom(f_{\theta,q,k})\setminus\left(\mathcal P_{x^{(\infty)}}\cup J_{f_{\theta,q,k}}\right).
$$

 Moreover,  if $\kappa_p$ is the GCF (greatest common factor) of $k$ and $p-1$,  then the followings hold:
\begin{enumerate}
\item[$(B1)$.] if $\kappa_p=1$ then there exists
$x_*\in Fix(f_{\theta,q,k})$ such that $x_*\neq x_0^*$ and $J_{f_{\theta,q,k}}=\{x_*\}$;
\item[$(B2)$.] if $\kappa_p\geq2$ then
$(J_{f_{\theta,q,k}},f_{\theta,q,k},|\cdot|_p)$ is topologically conjugate to the
full shift dynamics of $\kappa_p$ symbols.
\end{enumerate}
\end{enumerate}
Here $x_0^*=1$.
\end{thm}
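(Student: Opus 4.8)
The plan is to organize everything around the two distinguished points of the system: the fixed point $x_0^*=1$ and the pole $x^{(\infty)}=2-q-\theta$. Writing $g(x)=\frac{\theta x+q-1}{x+\theta+q-2}$ so that $f_{\theta,q,k}=g^{k}$ (the pointwise $k$-th power of the M\"obius map $g$, not an iterate), one checks $g(1)=1$, so $x_0^*=1\in Fix(f_{\theta,q,k})$, and the multiplier there is
\[
\lambda=\frac{k(\theta-1)}{\theta+q-1}.
\]
First I would run the elementary ultrametric reductions. Since $|\theta-1|_p<1$ always, and $|\theta-1|_p<|q^2|_p\le|q|_p$ under hypothesis $(B)$, the strict triangle inequality gives $|\theta+q-1|_p=|q|_p$ in case $(B)$; in both $(A)$ and $(B)$ one then obtains $|\lambda|_p<1$, so that $x_0^*=1$ is \emph{attractive throughout}. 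Moreover $x^{(\infty)}-1=-(\theta+q-1)$, so the pole lies at distance $|\theta+q-1|_p$ from $1$, i.e. on the sphere $S=\{x:\,|x-1|_p=|\theta+q-1|_p\}$, the unique sphere on which the denominator of $g$ can be small and $g$ large. The whole dichotomy $(A)$ versus $(B)$ is therefore not about the nature of $1$, but about the behaviour of $f_{\theta,q,k}$ on $S$, and the decisive quantity is the comparison of $|k|_p$ with $|\theta+q-1|_p$.

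For part $(A)$ the target is $A(x_0^*)=Dom(f_{\theta,q,k})$. I would prove a global ``descent toward $1$'' estimate by splitting $Dom(f_{\theta,q,k})$ according to the position of $x$ relative to $S$, using
\[
f_{\theta,q,k}(x)-1=g(x)^{k}-1=(g(x)-1)\sum_{j=0}^{k-1}g(x)^{j},\qquad g(x)-1=\frac{(\theta-1)(x-1)}{(x-1)+(\theta+q-1)}.
\]
The hypothesis $|k|_p\le|\theta+q-1|_p$ is exactly what is needed to force $|f_{\theta,q,k}(x)-1|_p<|x-1|_p$ off the pole and, in fact, to push the image of every $x$ strictly inside $S$; from this I would deduce both that $x^{(\infty)}$ has no $f_{\theta,q,k}$-preimage (hence $\mathcal P_{x^{(\infty)}}=\emptyset$ in this regime) and that every orbit eventually enters a small ball about $1$ on which $|\lambda|_p<1$ makes $f_{\theta,q,k}$ a strict contraction. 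Completeness of $\mathbb Q_p$ then yields convergence to $1$ for all $x\in Dom(f_{\theta,q,k})$, giving the claimed equality.

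For part $(B)$, with $|k|_p>|\theta+q-1|_p=|q|_p$, the same factorization instead produces \emph{expansion} near the pole, and the Julia set is built on $S$ by a Markov (inverse-branch) construction. The essential point is that inverting $f_{\theta,q,k}=g^{k}$ amounts to extracting $k$-th roots followed by the bijection $g^{-1}$, and $\mathbb Q_p$ (with $p\ge 3$) contains exactly $\kappa_p=\gcd(k,p-1)$ $k$-th roots of unity; hence over the relevant part of $S$ the map $f_{\theta,q,k}$ is $\kappa_p$-to-$1$. Concretely I would exhibit a ball $V$ and $\kappa_p$ pairwise disjoint sub-balls $U_1,\dots,U_{\kappa_p}\subset V$ (the Markov pieces) such that each restriction $f_{\theta,q,k}\colon U_i\to V$ is an expanding bijection onto $V\supseteq\bigcup_i U_i$; the inverse branches are then contractions, and the maximal invariant set
\[
J_{f_{\theta,q,k}}=\bigcap_{n\ge 0}f_{\theta,q,k}^{-n}(V)
\]
is nonempty, $f_{\theta,q,k}$-invariant, and disjoint from $\mathcal P_{x^{(\infty)}}$ (its points stay in the repelling region forever, so they never reach the pole). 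The itinerary map then conjugates $(J_{f_{\theta,q,k}},f_{\theta,q,k})$ to the full shift on $\kappa_p$ symbols. When $\kappa_p=1$ there is a single contracting branch, whose unique fixed point $x_*\neq x_0^*$ (existence by the $p$-adic fixed-point principle) is repelling for $f_{\theta,q,k}$ and forms the degenerate Julia set $\{x_*\}$, yielding $(B1)$; for $\kappa_p\ge 2$ one gets the genuine full shift of $(B2)$. Finally, the complement $Dom(f_{\theta,q,k})\setminus(\mathcal P_{x^{(\infty)}}\cup J_{f_{\theta,q,k}})$ is handled exactly as in $(A)$: points that are neither eventually lost at the pole nor trapped in the repeller are driven into the contraction ball of $1$.

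I expect the main obstacle to be the precise $p$-adic control near the pole that underpins $(B)$: verifying simultaneously (i) the expansion estimate on $S$, (ii) that the inverse branches number \emph{exactly} $\kappa_p$ and have disjoint images each covering $V$ — which requires a Hensel/Teichm\"uller analysis of the $k$-th power map to match the combinatorics of $\gcd(k,p-1)$ roots of unity against the geometry of $g$ — and (iii) that nothing outside $J_{f_{\theta,q,k}}\cup\mathcal P_{x^{(\infty)}}$ escapes the attracting basin. Pinning down the radii of the $U_i$ and the image ball $V$ so that the covering is full and the conjugacy genuinely lands on the \emph{full} shift is where the delicate estimates, and the role of the extra hypothesis $|\theta-1|_p<|q^2|_p$, will concentrate.
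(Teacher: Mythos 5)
Your overall plan coincides with the paper's: part $(A)$ by a descent-to-$1$ argument organized around the sphere through the pole, part $(B)$ by a Markov partition of $\kappa_p=\gcd(k,p-1)$ balls indexed by the $k$-th roots of unity, inverse branches obtained from principal $k$-th roots, conjugacy to the full shift via the itinerary map, and a Hensel/contraction argument for the extra fixed point in $(B1)$. However, there is one step in your part $(A)$ that is simply false as stated: the claim that the hypothesis $|k|_p\le|\theta+q-1|_p$ forces $|f_{\theta,q,k}(x)-1|_p<|x-1|_p$ away from the pole and ``pushes the image of every $x$ strictly inside $S$.'' Take $x=x^{(\infty)}+\varepsilon$ with $|\varepsilon|_p$ very small (such $x$ lies in the domain and on $S$). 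Then $g(x)-1=(\theta-1)(x-1)/\varepsilon$ has arbitrarily large norm, hence $|f_{\theta,q,k}(x)-1|_p=|g(x)|_p^k$ is arbitrarily large: the image leaves $S$ outward, and the one-step descent fails. The orbit does return (a point of large norm is mapped near $1$), but your proposed mechanism does not see this excursion, and the subsequent deduction that $x^{(\infty)}$ has no preimage is left hanging, since you derive it ``from'' the failed descent estimate. What is actually needed here --- and is the real content of the paper's part $(A)$ --- is a lower bound of the form $|y^k-a|_p\ge|a-1|_p$ valid for \emph{all} $y\in\mathbb Q_p$ when $a\in\mathcal E_p$ and $|a-1|_p\ge|k|_p$ (applied with $a=2-q-\theta$). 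This single inequality shows at once that the image of any point stays at distance at least $|q+\theta-1|_p$ from the pole (so $\mathcal P_{x^{(\infty)}}=\emptyset$) and that after at most two iterations every orbit lands in the ball $|x-1|_p<|q+\theta-1|_p$, where the contraction you describe takes over. Without some such uniform estimate on the $k$-th power map your case $(A)$ does not close.

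Two smaller points. In part $(B)$ your reduction of the basin statement to ``handled exactly as in $(A)$'' undersells the hardest computation: one must show that every point in the tiny ball $|x-2+q+\theta|_p\le|\theta-1|_p$ that is \emph{not} in one of the $\kappa_p$ Markov balls still converges to $1$, and this requires a genuine case analysis at the scale $|q(\theta-1)|_p$ (expanding $(1-q/\eta)^k$ to second and third order in $q/k$), which is precisely where the hypothesis $|\theta-1|_p<|q^2|_p$ is consumed --- you correctly anticipate this as the main obstacle, but it is not a repetition of $(A)$. Also, your picture of $\kappa_p$ sub-balls $U_i$ of a single ball $V$ with $f\colon U_i\to V$ a bijection onto $V$ is slightly off geometrically: the two types of Markov balls (the one near $1-q$ and the ones near the pole) have different expansion rates, so their images are balls of different radii; what one proves, and what the subshift machinery needs, is only that each image contains the union of all the pieces.
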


\begin{rk} As we mentioned earlier that in \cite{ALS17} the chaoticity of \eqref{func} has been studied at $k=3$. To establish this result, it was 
essentially used particular properties of the fixed points of \eqref{func}. An advantage of the present paper is that we are able to prove the chaoticity of the Potts-Bethe mapping for arbitrary values of $k$, and moreover, we are not even using the existence of  the fixed points. Roughly speaking, we are constructing (explicitly) a Markov partition of the mapping \eqref{func} which allows us to prove the main result of this paper. 
\end{rk}

\begin{rk} In \cite{MR1,MR2} the authors established that the function \eqref{func} may have at least one fixed point, and moreover, it was found a necessary condition (i.e. $q$ is divisible by $p$ ($q\in p\bn$)) for the existence of more than one its fixed points. Therefore, they formulated a conjecture: \texttt{if $q\in p\bn$ and $\theta\in \ce_p$, for any $k\in\bn$, then the function \eqref{func} has at least two fixed points}. 
Our Theorem \ref{thm_mainasos} (A) shows that the mentioned conjecture is not always true.  
\end{rk}

We stress that, in the $p$-adic setting, due to lack of the convex structure of the set
of $p$-adic Gibbs measures, it was quite difficult to constitute a phase transition with
some features of the set of $p$-adic Gibbs measures. However, Theorem \ref{thm_mainasos}(B2)
yields
that the set of $p$-adic Gibbs measures is huge which is a' priori not clear (see \cite{MR2}). Moreover, the advantage of
the present work allows to find lots of periodic $p$-adic Gibbs measures for the $p$-adic Potts
model.
Besides, this theorem together with the results of \cite{MFKh162} will open new perspectives in
investigations of generalized $p$-adic self-similar sets.

On the other hand, our results shed some light to the question of investigation of dynamics of rational functions
in the $p$-adic analysis, since a global dynamical
structure of rational maps on $\mathbb{Q}_p$ remains unclear.  Some particular rational functions have been considered in \cite{B0,B1, DS,FFLW,KM1,KhN,MFSMKhO1}.

\section{Preliminaries}

\subsection{$p$-adic numbers}

Let $\Q$ be the field of rational numbers. For a fixed prime number
$p$, every rational number $x\ne 0$ can be represented in the form
$x = p^r{n\over m}$, where $r, n\in \Z$, $m$ is a positive integer,
and $n$ and $m$ are relatively prime with $p$: $(p, n) = 1$, $(p, m)
= 1$. The $p$-adic norm of $x$ is given by
$$|x|_p=\left\{\begin{array}{ll}
p^{-r}\ \ \mbox{for} \ \ x\ne 0\\
0\ \ \mbox{for} \ \ x = 0.
\end{array}\right.
$$
This norm is non-Archimedean  and satisfies the so called strong
triangle inequality
$$|x+y|_p\leq \max\{|x|_p,|y|_p\}.$$

The completion of $\Q$ with respect to the $p$-adic norm defines
the $p$-adic field
 $\Q_p$. Recall that $\mathbb Q_p$ is not ordered field. So,
 we may compare two $p$-adic numbers only w.r.t. their $p$-adic norms.

 For given $p$-adic numbers $x$ and $y$, for the sake of convenience,
 we write $x=O[y]$, $x=o[y]$ or $y=o[x]$ when
 $|x|_p=|y|_p$, $|x|_p<|y|_p$ or $|y|_p<|x|_p$, respectively. For example,
 if $x=1-p+p^2$ we can write $x=O[1]$, $x-1=o[1]$ or $x-1+p=o[p]$. In other words,
 this means that $|x|_p=1$, $|x-1|_p<1$ or $|x-1+p|_p<|p|_p$, respectively.
So, the symbols $O[\cdot]$ and $o[\cdot]$ make our work easier when we need
to calculate the $p$-adic norm of $p$-adic numbers.
It is easy to see that $y=O[x]$ if and only if $x=O[y]$. Moreover,
$y=O[x]$ implies that $y=o[\frac{x}{p}]$.

We give some basic properties of $O[\cdot]$ and $o[\cdot]$, which will be used later on.
\begin{lemma}\label{lemOo} Let $x,y\in\mathbb Q_p$. Then the following statements hold:\\
\begin{enumerate}
\item[1.] $O[x]O[y]=O[xy]$;\\
\item[2.] $xO[y]=O[y]x=O[xy]$;\\
\item[3.] $O[x]o[y]=o[xy]$;\\
\item[4.] $o[x]o[y]=o[xy]$;\\
\item[5.] $xo[y]=o[y]x=o[xy]$;\\
\item[6.] $\frac{O[x]}{O[y]}=O\left[\frac{x}{y}\right]$, if $y\neq0$;\\
\item[7.] $\frac{o[x]}{O[y]}=o\left[\frac{x}{y}\right]$, if $y\neq0$.
\end{enumerate}
\end{lemma}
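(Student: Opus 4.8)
The plan is to reduce all seven identities to the single multiplicative property of the $p$-adic absolute value, namely $|xy|_p = |x|_p\,|y|_p$ for all $x,y\in\mathbb Q_p$, and consequently $|x/y|_p = |x|_p/|y|_p$ whenever $y\neq 0$. This multiplicativity is immediate from the definition of $|\cdot|_p$: writing $x=p^r n/m$ and $y = p^s n'/m'$ with numerators and denominators prime to $p$, the product $xy = p^{r+s}(nn')/(mm')$ again has numerator and denominator prime to $p$, so $|xy|_p = p^{-(r+s)} = |x|_p|y|_p$. Once this is established, every claim becomes a one-line computation after unwinding the $O[\cdot]$ and $o[\cdot]$ notation.

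For the equality statements (items 1, 2, and 6), I would unwind the notation directly. For item 1, take any $a,b$ with $|a|_p=|x|_p$ and $|b|_p=|y|_p$; then $|ab|_p = |a|_p|b|_p = |x|_p|y|_p = |xy|_p$, which is exactly $O[x]O[y]=O[xy]$. Item 2 is the special case $a=x$, and item 6 follows identically using the quotient rule $|a/b|_p = |a|_p/|b|_p = |x|_p/|y|_p = |x/y|_p$, which is legitimate since $y\neq0$ forces the denominator factor to be nonzero.

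For the strict inequality statements (items 3, 4, 5, and 7), the argument is the same except that one equality becomes a strict inequality that survives multiplication by a strictly positive norm. In item 3, for instance, choosing $|a|_p=|x|_p$ and $|b|_p<|y|_p$ gives $|ab|_p = |a|_p|b|_p < |a|_p|y|_p = |x|_p|y|_p = |xy|_p$, since multiplying the strict inequality $|b|_p<|y|_p$ by the positive number $|x|_p$ preserves it. Items 4 and 5 are handled the same way, and item 7 combines the quotient rule with $|b|_p=|y|_p>0$.

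The only point that requires any care — and the closest thing here to an obstacle — is the bookkeeping of zero cases, since the symbol $o[z]$ only carries content when $z\neq0$ (no element has norm strictly less than $0$). Thus in items 3--5 and 7 the very appearance of an $o[\cdot]$ on each side implicitly guarantees that the relevant factors are nonzero, which is precisely what allows me to multiply inequalities by strictly positive norms (for example, $o[xy]$ in item 5 forces $x\neq0$, so $|x|_p>0$). I would state this convention once at the start and then treat all seven items uniformly, after which the proof is a mechanical application of multiplicativity with no further subtlety.
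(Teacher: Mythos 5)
The paper states this lemma without proof, treating it as an immediate consequence of the multiplicativity of the $p$-adic norm, and your argument is exactly the standard one the authors have in mind: unwind $O[\cdot]$ and $o[\cdot]$ into equalities and strict inequalities of norms and apply $|ab|_p=|a|_p|b|_p$ (and the quotient rule). Your explicit remark that the $o[\cdot]$ symbols force the relevant arguments to be nonzero, so that strict inequalities survive multiplication by positive norms, is a correct and welcome point of care that the paper leaves implicit.
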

 Any $p$-adic number $x\ne 0$ can be uniquely represented
in the canonical form
\begin{equation}\label{ek}
x = p^{\gamma(x)}(x_0+x_1p+x_2p^2+\dots),
\end{equation}
where $\gamma(x)\in \Z$ and the integers $x_j$ satisfy: $x_0 > 0$,
$0\leq x_j \leq p - 1$. In this case $|x|_p =
p^{-\gamma(x)}$.

For each $a\in \bq_p$, $r>0$ we denote $$ B_r(a)=\{x\in \bq_p :
|x-a|_p< r\}.$$ We recall that $\mathbb{Z}_p=\{x\in \Q_p:
|x|_p\leq 1\}$ and $\mathbb Z_p^*=\{x\in\mathbb Q_p: |x|_p=1\}$ are the set of all \textit{$p$-adic integers}
and {\it $p$-adic units}, respectively.

The following lemma is known as the Hensel's Lemma
\begin{lemma}\cite{Borevich}
Let $F(x)$ be a polynomial whose coefficients are $p$-adic integers.
Let $x^*$ be a $p$-adic integer such that for some $i\geq0$
we have 
$$
F(x^*)\equiv0(\operatorname{mod }p^{2i+1}),\ \ \ F'(x^*)\equiv0(\operatorname{mod }p^{i}),\ \ \ F'(x^*)\not\equiv0(\operatorname{mod }p^{i+1}).
$$
Then $F(x)$ has a $p$-adic integer root $x_*$ such that $x_*\equiv x^*(\operatorname{mod }p^{i+1})$.
\end{lemma}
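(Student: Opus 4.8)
The plan is to prove this strong form of Hensel's Lemma by Newton's iteration, starting from the approximate root $x^*$. First I would repackage the three hypotheses into a single quantitative estimate: since $|F(x^*)|_p\leq p^{-(2i+1)}$ and $|F'(x^*)|_p=p^{-i}$ (the latter precisely because $F'(x^*)$ is divisible by $p^i$ but not $p^{i+1}$), the ratio
$$\delta_0:=\frac{|F(x^*)|_p}{|F'(x^*)|_p^2}\leq\frac{p^{-(2i+1)}}{p^{-2i}}=p^{-1}<1$$
is strictly less than $1$. This is exactly the classical smallness condition $|F(x^*)|_p<|F'(x^*)|_p^2$ under which $p$-adic Newton iteration converges, so the role of the three congruences is simply to produce it.

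Next I would set $x_0=x^*$ and $x_{n+1}=x_n-F(x_n)/F'(x_n)$, and carry two invariants along the iteration. The essential algebraic input is the Taylor expansion $F(x+h)=\sum_{j\geq0}c_j(x)h^j$ with $c_j(x)=F^{(j)}(x)/j!=\sum_m a_m\binom{m}{j}x^{m-j}$; since the coefficients $a_m$ lie in $\mathbb{Z}_p$ and the binomial coefficients are ordinary integers, every $c_j(x)\in\mathbb{Z}_p$ whenever $x\in\mathbb{Z}_p$, hence $|c_j(x)|_p\leq1$. Writing $h_n=F(x_n)/F'(x_n)$, the first two Taylor terms cancel by the definition of the iteration, leaving $F(x_{n+1})=\sum_{j\geq2}c_j(x_n)h_n^j$, so that $|F(x_{n+1})|_p\leq|h_n|_p^2=|F(x_n)|_p^2/|F'(x_n)|_p^2$. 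Expanding $F'$ the same way gives $|F'(x_{n+1})-F'(x_n)|_p\leq|h_n|_p$; since $|h_n|_p=|F(x_n)|_p/|F'(x_n)|_p$ will be strictly smaller than $|F'(x_n)|_p$ (equivalently $\delta_n<1$), the strong triangle inequality in its isosceles form forces $|F'(x_{n+1})|_p=|F'(x_n)|_p$. This gives the first invariant $|F'(x_n)|_p\equiv p^{-i}$.

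With $|F'|$ pinned at $p^{-i}$, the second invariant $\delta_n:=|F(x_n)|_p/|F'(x_n)|_p^2$ satisfies $\delta_{n+1}\leq\delta_n^2$, whence $\delta_n\leq\delta_0^{2^n}\leq p^{-2^n}$, the quadratic convergence. Consequently $|x_{n+1}-x_n|_p=|h_n|_p=\delta_n|F'(x_n)|_p\leq p^{-(2^n+i)}\to0$, so $(x_n)$ is Cauchy; by completeness of $\mathbb{Q}_p$ and the closedness of $\mathbb{Z}_p$ it converges to some $x_*\in\mathbb{Z}_p$. Continuity of the polynomial $F$ together with $|F(x_n)|_p=\delta_n p^{-2i}\to0$ gives $F(x_*)=0$. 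For the congruence, the ultrametric inequality yields $|x_*-x^*|_p\leq\max_{n\geq0}|x_{n+1}-x_n|_p=|x_1-x_0|_p\leq p^{-(i+1)}$, i.e. $x_*\equiv x^*\ (\mathrm{mod}\ p^{i+1})$, as required.

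The main obstacle, and the step demanding the most care, is maintaining the two invariants simultaneously: the quadratic decay of $\delta_n$ is only useful once the denominator $|F'(x_n)|_p$ is known not to degrade, and keeping $|F'(x_n)|_p$ constant in turn relies on the correction $|h_n|_p$ staying strictly below $|F'(x_n)|_p$ at every step. These facts must be established together by a single induction, the base case being the computation $\delta_0\leq p^{-1}$ above and the inductive step combining the two Taylor bounds with the isosceles property. The integrality $c_j(x)\in\mathbb{Z}_p$ is what prevents the higher Taylor terms from spoiling these estimates, so it is the one place where the hypothesis that $F$ has $p$-adic integer coefficients is genuinely used.
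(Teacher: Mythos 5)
Your proof is correct. Note that the paper itself gives no proof of this lemma --- it is quoted verbatim from Borevich--Shafarevich with a citation --- so there is nothing internal to compare against; your Newton-iteration argument is essentially the classical textbook proof of this strong form of Hensel's lemma. The key points are all in order: the three congruences are correctly repackaged as $|F(x^*)|_p \leq p^{-(2i+1)}$ and $|F'(x^*)|_p = p^{-i}$, hence $|F(x^*)|_p < |F'(x^*)|_p^2$; the integrality of the divided Taylor coefficients $c_j(x) = \sum_m a_m \binom{m}{j} x^{m-j} \in \mathbb{Z}_p$ is exactly where the hypothesis $F \in \mathbb{Z}_p[x]$ enters; the two invariants ($|F'(x_n)|_p = p^{-i}$ constant, via the isosceles triangle inequality since $|h_n|_p < |F'(x_n)|_p$, and $\delta_{n+1} \leq \delta_n^2$) are correctly run as a single simultaneous induction, which you rightly flag as the delicate step; and the final congruence follows from $|x_* - x^*|_p \leq \sup_n |h_n|_p = |h_0|_p \leq p^{-(i+1)}$. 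Two points you leave implicit but which are immediate from your estimates: $x_{n+1} \in \mathbb{Z}_p$ at every step (since $|h_n|_p \leq p^{-(i+1)} \leq 1$, needed for the integrality of $c_j(x_n)$ to apply), and the iteration is well-defined because $|F'(x_n)|_p = p^{-i} \neq 0$. Neither affects correctness.
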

The \textit{$p$-adic exponential} is defined by
$$\exp_p(x) =\sum^\infty_{n=0}{x^n\over n!},$$
which converges for every $x\in B_{p^{-1/(p-1)}}(0)$. Denote
$$
\mathcal E_p=\left\{x\in\mathbb Q_p: |x-1|_p<p^{-1/(p-1)}\right\}.
$$
This set is the range of the $p$-adic exponential function.
The following fact is well-known.

\begin{lemma}\cite{MFSMKhO2}\label{epproperty}
Let $p\geq3$. The set $\mathcal E_p$ has the following properties:\\
$(a)$ $\mathcal E_p$ is a group under multiplication;\\
$(b)$ $|a-b|_p<1$ for all $a,b\in\mathcal E_p$;\\
$(c)$ If $a,b\in\mathcal E_p$ then it holds
$|a+b|_p=1$.\\
$(d)$ If $a\in\mathcal E_p$, then
there is an element $h\in B_{p^{-1/(p-1)}}(0)$ such that
$a=\exp_p(h)$.
\end{lemma}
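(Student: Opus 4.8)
The plan is to reduce the defining inequality to a closed ball with integer radius, after which parts $(a)$--$(c)$ become one-line consequences of the strong triangle inequality, and to treat $(d)$ separately via the $p$-adic logarithm. The key preliminary observation is that the value group of $|\cdot|_p$ is $\{p^m:m\in\Z\}\cup\{0\}$ and that $p^{-1}<p^{-1/(p-1)}<1$ holds precisely when $p\geq3$ (since $0<1/(p-1)<1$ there). Consequently the condition $|x-1|_p<p^{-1/(p-1)}$ is equivalent to $|x-1|_p\leq p^{-1}$, so that
$$
\mathcal E_p=\{x\in\mathbb Q_p:\ |x-1|_p\leq p^{-1}\},
$$
and likewise $B_{p^{-1/(p-1)}}(0)=\{x\in\mathbb Q_p:\ |x|_p\leq p^{-1}\}$. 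In particular every $a\in\mathcal E_p$ satisfies $|a-1|_p<1=|1|_p$, whence $|a|_p=1$ by the isosceles principle.

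For $(b)$, writing $a-b=(a-1)-(b-1)$ and applying the strong triangle inequality gives $|a-b|_p\leq\max\{|a-1|_p,|b-1|_p\}\leq p^{-1}<1$. For $(c)$, I would write $a+b=2+(a-1)+(b-1)$: here $|2|_p=1$ because $p\geq3$, while $|(a-1)+(b-1)|_p\leq p^{-1}<1=|2|_p$, so the isosceles principle forces $|a+b|_p=|2|_p=1$; this is the only place where $p\geq3$ is used outside the reduction above. For $(a)$, the identity $1$ lies in $\mathcal E_p$; closure under products follows from $ab-1=(a-1)(b-1)+(a-1)+(b-1)$, whose norm is at most $p^{-1}$; and closure under inverses follows from $|a|_p=1$ together with $a^{-1}-1=-(a-1)/a$, which yields $|a^{-1}-1|_p=|a-1|_p\leq p^{-1}$.

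For $(d)$ I would exhibit the preimage explicitly through the logarithm. Given $a\in\mathcal E_p$, set $h=\log_p(a)=\sum_{n=1}^\infty(-1)^{n-1}(a-1)^n/n$; since $|a-1|_p\leq p^{-1}<1$ the series converges, and the standard bound that for $n\geq2$ the $n$-th summand satisfies $|(a-1)^n/n|_p=|a-1|_p^{n}/|n|_p<|a-1|_p$ (which uses $v_p(n)<n-1$) shows that the linear term dominates, so $|h|_p=|a-1|_p\leq p^{-1}$, i.e. $h\in B_{p^{-1/(p-1)}}(0)$. The mutual-inverse relation $\exp_p(\log_p a)=a$ on this ball then gives $a=\exp_p(h)$, which is exactly $(d)$. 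The main obstacle is precisely this last part: whereas $(a)$--$(c)$ are immediate from the ultrametric inequality, $(d)$ requires the convergence of $\log_p$ and the isometry $|\log_p a|_p=|a-1|_p$ on $\{|a-1|_p\leq p^{-1}\}$, together with the fact that $\exp_p$ and $\log_p$ are mutually inverse there. These are classical facts but must be invoked with care, since the radius $p^{-1/(p-1)}$ is exactly the threshold governing convergence of $\exp_p$.
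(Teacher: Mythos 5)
Your proof is correct, but there is nothing in the paper to compare it against: Lemma \ref{epproperty} is stated with the citation \cite{MFSMKhO2} and no proof is given here (the paper merely remarks, right before the lemma, that $\mathcal E_p$ is the range of $\exp_p$, which is essentially your part $(d)$). Judged on its own merits, your argument is sound. The opening reduction is the right move: since the value group of $|\cdot|_p$ on $\Q_p^\times$ is $\{p^m: m\in\Z\}$ and $0<1/(p-1)<1$ exactly when $p\geq 3$, the strict inequality $|x-1|_p<p^{-1/(p-1)}$ is indeed equivalent to $|x-1|_p\leq p^{-1}$, and after that $(a)$, $(b)$, $(c)$ follow from the strong triangle inequality exactly as you write them; your identities $ab-1=(a-1)(b-1)+(a-1)+(b-1)$ and $a^{-1}-1=-(a-1)/a$ (legitimate since $|a|_p=1$ by the isosceles principle), and the decomposition $a+b=2+(a-1)+(b-1)$ with $|2|_p=1$ for $p\geq 3$, are all correct, and you rightly flag $(c)$ as the one place where $p\geq 3$ enters beyond the radius reduction. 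For $(d)$, your dominance estimate checks out: for $n\geq 2$ one has $v_p(n)\leq\log_p n<n-1$ (equivalently $p^{n-1}>n$ for $p\geq3$), so $|(a-1)^n/n|_p\leq |a-1|_p\, p^{-(n-1)+v_p(n)}<|a-1|_p$, giving $|\log_p a|_p=|a-1|_p\leq p^{-1}<p^{-1/(p-1)}$, and then the classical identity $\exp_p(\log_p a)=a$ on $\{|a-1|_p<p^{-1/(p-1)}\}$ finishes the claim. The one thing worth making explicit, since you invoke it as a black box, is that this mutual-inverse relation is itself proved by the same kind of coefficientwise ultrametric estimates (or by the formal-series identity plus the fact that both maps are isometries on these balls); citing a standard reference such as Schikhof or Koblitz, or \cite{MFSMKhO2} as the paper does, would close that loop cleanly.
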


\begin{lemma}\cite{MFKh18}\label{alpbetgam}
Let $k\geq2$ and $p\geq3$. Then for any $\alpha,\beta\in\mathcal E_p$ there exists
$\gamma\in\mathcal E_p$ such that
\begin{equation}\label{abg}
\sum_{j=0}^{k-1}\alpha^{k-1-j}\beta^j=k\gamma
\end{equation}
\end{lemma}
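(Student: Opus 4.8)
The plan is to reduce the statement to a one-variable estimate by exploiting the group structure of $\mathcal E_p$, and then to expand a finite geometric sum by the binomial theorem. Since we only have to exhibit some $\gamma\in\mathcal E_p$ with $\sum_{j=0}^{k-1}\alpha^{k-1-j}\beta^j=k\gamma$, the candidate is forced to be $\gamma=\frac1k\sum_{j=0}^{k-1}\alpha^{k-1-j}\beta^j$, and the task is to show $\gamma\in\mathcal E_p$. First I would factor out $\alpha^{k-1}$ and put $\rho=\beta\alpha^{-1}$; by Lemma \ref{epproperty}(a) the set $\mathcal E_p$ is a multiplicative group, so $\rho\in\mathcal E_p$ and $\alpha^{k-1}\in\mathcal E_p$. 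Then $\sum_{j=0}^{k-1}\alpha^{k-1-j}\beta^j=\alpha^{k-1}\sum_{j=0}^{k-1}\rho^{j}$, hence $\gamma=\alpha^{k-1}\cdot\big(\frac1k\sum_{j=0}^{k-1}\rho^{j}\big)$. As $\mathcal E_p$ is closed under products, it remains only to prove that $\frac1k\sum_{j=0}^{k-1}\rho^{j}\in\mathcal E_p$ for a single $\rho\in\mathcal E_p$.

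For this I would write $\rho=1+\varepsilon$. By Lemma \ref{epproperty}(b), applied with the second element equal to $1\in\mathcal E_p$, we get $|\rho-1|_p<1$, and since $p$-adic valuations are integers this means $|\varepsilon|_p\leq p^{-1}$. Expanding $\rho^{j}=(1+\varepsilon)^{j}$ and summing over $j$ with the hockey-stick identity $\sum_{j=m}^{k-1}\binom{j}{m}=\binom{k}{m+1}$ yields the finite expansion $\sum_{j=0}^{k-1}\rho^{j}=\sum_{m=0}^{k-1}\binom{k}{m+1}\varepsilon^{m}$. The term $m=0$ equals $k$, so dividing by $k$ and using the identity $\frac1k\binom{k}{m+1}=\frac{1}{m+1}\binom{k-1}{m}$ I obtain
\begin{equation*}
\frac1k\sum_{j=0}^{k-1}\rho^{j}=1+\sum_{m=1}^{k-1}\frac{1}{m+1}\binom{k-1}{m}\varepsilon^{m}.
\end{equation*}

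Finally I would estimate this finite tail. Since $\binom{k-1}{m}\in\mathbb{Z}$ and $|\varepsilon^{m}|_p\leq p^{-m}$, each summand obeys $|\frac{1}{m+1}\binom{k-1}{m}\varepsilon^{m}|_p\leq p^{\gamma(m+1)-m}$, where $\gamma(\cdot)$ denotes the $p$-adic valuation. The whole argument thus rests on the elementary inequality $\gamma(m+1)\leq m-1$ for all $m\geq1$ and $p\geq3$, which I view as the only genuinely quantitative point: it follows from $p^{\gamma(m+1)}\leq m+1\leq 3^{\,m-1}$ for $m\geq2$, together with the direct verification $\gamma(2)=0=m-1$ at $m=1$. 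Consequently every summand has norm at most $p^{-1}$, so by the strong triangle inequality the finite sum has norm $\leq p^{-1}<p^{-1/(p-1)}$, giving $\frac1k\sum_{j=0}^{k-1}\rho^{j}\in\mathcal E_p$ and, together with the first paragraph, $\gamma\in\mathcal E_p$. The main obstacle is really the bookkeeping in the second paragraph: one must notice that the perilous division by $k$ is absorbed by the rewriting $\frac1k\binom{k}{m+1}=\frac{1}{m+1}\binom{k-1}{m}$, after which only the benign denominators $m+1$ survive and the estimate becomes routine.
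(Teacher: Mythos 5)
Your proof is correct. Note first that the paper itself offers no proof of this lemma --- it is imported verbatim from \cite{MFKh18} --- so there is no in-text argument to match step by step; judged on its own, your argument is complete: the factorization $\sum_{j}\alpha^{k-1-j}\beta^j=\alpha^{k-1}\sum_j\rho^j$ is legitimate because $\mathcal E_p$ is a multiplicative group, the hockey-stick identity correctly gives $\sum_{j=0}^{k-1}(1+\varepsilon)^j=\sum_{m=0}^{k-1}\binom{k}{m+1}\varepsilon^m$, the rewriting $\frac1k\binom{k}{m+1}=\frac{1}{m+1}\binom{k-1}{m}$ genuinely absorbs the dangerous division by $k$ (which may well satisfy $|k|_p<1$), and the bound $p^{\gamma(m+1)}\le m+1\le 3^{m-1}\le p^{m-1}$ for $m\ge2$, plus the check at $m=1$, gives $|\frac{1}{m+1}\binom{k-1}{m}\varepsilon^m|_p\le p^{-1}<p^{-1/(p-1)}$, which is exactly what is needed for membership in $\mathcal E_p$ when $p\ge3$. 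Your route is more elementary than the one the paper's toolkit points to: the authors state Lemma \ref{epproperty}(d) (every $a\in\mathcal E_p$ is $\exp_p(h)$) precisely so that sums like $\sum_j\alpha^{k-1-j}\beta^j=(\alpha^k-\beta^k)/(\alpha-\beta)$ can be handled through the $p$-adic exponential, where the factor $k$ is extracted from $\exp_p(kz)-1=kz\cdot(\text{unit in }\mathcal E_p)$. That route is shorter once the analytic machinery is in place; yours trades it for a purely combinatorial computation whose only quantitative input is $\gamma(m+1)\le m-1$, and it has the minor virtue of working uniformly without invoking convergence of $\exp_p$. Either way the statement stands, and your writeup could serve as a self-contained replacement for the external citation.
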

\begin{cor}\label{coralb}
Let $p\geq3$ and $k\in\mathbb N$. Then one has
$$
\alpha^k-\beta^k=k(\alpha-\beta)+o\left[k(\alpha-\beta)\right],\ \ \ \forall\alpha,\beta\in\mathcal E_p.
$$
\end{cor}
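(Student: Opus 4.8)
The plan is to reduce everything to the elementary algebraic factorization
$$
\alpha^k-\beta^k=(\alpha-\beta)\sum_{j=0}^{k-1}\alpha^{k-1-j}\beta^j
$$
and then to control the geometric-type sum by means of Lemma \ref{alpbetgam}. First I would dispose of the trivial case $k=1$: here the left-hand side is exactly $\alpha-\beta=k(\alpha-\beta)$ and the error term is $0$, which qualifies as $o[k(\alpha-\beta)]$ whenever $\alpha\neq\beta$ (the case $\alpha=\beta$ being vacuous, since both sides vanish and $o[\,\cdot\,]$ is only meaningful relative to a nonzero quantity). So from now on assume $k\geq2$ and $\alpha\neq\beta$.

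For $k\geq2$ the function lies in the regime of Lemma \ref{alpbetgam}, which supplies an element $\gamma\in\mathcal E_p$ with $\sum_{j=0}^{k-1}\alpha^{k-1-j}\beta^j=k\gamma$. Substituting into the factorization gives
$$
\alpha^k-\beta^k=k\gamma(\alpha-\beta),
$$
and writing $\gamma=1+(\gamma-1)$ splits this as $k(\alpha-\beta)+k(\alpha-\beta)(\gamma-1)$. Thus it remains only to check that the second summand is $o[k(\alpha-\beta)]$.

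The final step is a size estimate. Since $\gamma\in\mathcal E_p$, the very definition of $\mathcal E_p$ gives $|\gamma-1|_p<p^{-1/(p-1)}<1$, i.e.\ $\gamma-1=o[1]$. Multiplying by $k(\alpha-\beta)$ and invoking the multiplicativity of the $p$-adic norm (equivalently, Lemma \ref{lemOo}(5)) yields
$$
|k(\alpha-\beta)(\gamma-1)|_p=|k(\alpha-\beta)|_p\,|\gamma-1|_p<|k(\alpha-\beta)|_p,
$$
which is precisely the statement that $k(\alpha-\beta)(\gamma-1)=o[k(\alpha-\beta)]$. Combining this with the splitting above delivers the claimed identity.

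I do not expect a genuine obstacle: the corollary is a direct consequence of Lemma \ref{alpbetgam}, its only real content being the observation that the factor $\gamma$ produced by that lemma lies in $\mathcal E_p$ and therefore differs from $1$ by a quantity of norm strictly less than $1$. The sole point demanding minor care is the bookkeeping of the degenerate situations $k=1$ and $\alpha=\beta$, together with the convention that the error symbol $o[k(\alpha-\beta)]$ presupposes $k(\alpha-\beta)\neq0$.
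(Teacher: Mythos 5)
Your proof is correct and follows essentially the same route as the paper: factor $\alpha^k-\beta^k$, apply Lemma \ref{alpbetgam} to write the sum as $k\gamma$ with $\gamma\in\mathcal E_p$, and absorb $k(\alpha-\beta)(\gamma-1)$ into the error term via $|\gamma-1|_p<1$. Your explicit handling of the degenerate cases $k=1$ (where Lemma \ref{alpbetgam} does not apply) and $\alpha=\beta$ is a small but legitimate tidying-up that the paper omits.
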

\begin{proof} Let $\alpha,\beta\in\mathcal E_p$.
Due to Lemma \ref{alpbetgam} we have
$$
\sum_{j=0}^{k-1}\alpha^{k-1-j}\beta^j=k+k(\gamma-1),
$$
where $\gamma-1=o[1]$.

Using the last one with Lemma \ref{lemOo} we obtain
\begin{eqnarray*}
\alpha^k-\beta^k&=&(\alpha-\beta)\sum_{j=0}^{k-1}\alpha^{k-j-1}\beta^j\\
&=&k(\alpha-\beta)+k(\alpha-\beta)(\gamma-1)\\
&=&k(\alpha-\beta)+O[k(\alpha-\beta)]o[1]\\
&=&k(\alpha-\beta)+o[k(\alpha-\beta)],
\end{eqnarray*}
which is the required relation.
\end{proof}

\subsection{$p$-adic subshift}

Let $f:X\to\mathbb Q_p$ be a map from a compact
open set $X$ of $\mathbb Q_p$ into $\mathbb Q_p$. We assume that (i) $f^{-1}(X)\subset X$;
(ii) $X=\cup_{j\in I}B_{r}(a_j)$ can be written as a finite disjoint
union of balls of centers $a_j$ and of the same radius $r$ such that for each $j\in I$ there is an integer
$\tau_j\in\mathbb Z$ such that
\begin{equation}\label{tau}
|f(x)-f(y)|_p=p^{\tau_j}|x-y|_p,\ \ \ \ x,y\in B_r(a_j).
\end{equation}
For such a map $f$, define its Julia set by
\begin{equation}\label{J}
J_f=\bigcap_{n=0}^\infty f^{-n}(X).
\end{equation}
It is clear that $f^{-1}(J_f)=J_f$ and then $f(J_f)\subset J_f$. The triple $(X,J_f,f)$ is
called a $p$-adic {\it repeller} if all $\tau_j$ in \eqref{tau} are
positive.
 For any $i\in I$, we let
$$
I_i:=\left\{j\in I: B_r(a_j)\cap
f(B_r(a_i))\neq\varnothing\right\}=\{j\in I: B_r(a_j)\subset
f(B_r(a_i))\}
$$
(the second equality holds because of the expansiveness and of the
ultrametric property). Then define a matrix $A=(a_{ij})_{I\times
I}$, called \textit{incidence matrix} as follows
$$
a_{ij}=\left\{\begin{array}{ll}
1,\ \ \mbox{if }\ j\in I_i;\\
0,\ \ \mbox{if }\ j\not\in I_i.
\end{array}
\right.
$$
If $A$ is irreducible, we say that $(X,J_f,f)$ is
\textit{transitive}. Here the irreducibility of $A$  means, for any
pair $(i,j)\in I\times I$ there is positive integer $m$ such that
$a_{ij}^{(m)}>0$, where $a_{ij}^{(m)}$ is the entry of the matrix
$A^m$.

Given $I$ and the irreducible incidence matrix $A$ as above, we denote
$$
\Sigma_A=\{(x_k)_{k\geq 0}: \ x_k\in I,\  A_{x_k,x_{k+1}}=1, \ k\geq
0\}
$$
which is the corresponding subshift space, and let $\sigma$ be the
shift transformation on $\Sigma_A$. We equip $\Sigma_A$ with a
metric $d_f$ depending on the dynamics which is defined as follows.
First for $i,j\in I,\ i\neq j$ let $\k(i,j)$ be the integer such
that $|a_i-a_j|_p=p^{-\k(i,j)}$. It clear that $\k(i,j)<\log_p(r)$. By
the ultra-metric inequality, we have
$$
|x-y|_p=|a_i-a_j|_p\ \ \ i\neq j,\ \forall x\in B_r(a_i), \forall
y\in B_r(a_j)
$$
For $x=(x_0,x_1,\dots,x_n,\dots)\in\Sigma_A$ and
$y=(y_0,y_1,\dots,y_n,\dots)\in\Sigma_A$, define
$$
d_f(x,y)=\left\{\begin{array}{ll}
p^{-\tau_{x_0}-\tau_{x_1}-\cdots-\tau_{x_{n-1}}-\k(x_{n},y_{n})}&, \mbox{ if }n\neq0\\
p^{-\k(x_0,y_0)}&, \mbox{ if }n=0
\end{array}\right.
$$
where $n=n(x,y)=\min\{i\geq0: x_i\neq y_i\}$. It is clear that $d_f$
defines the same topology as the classical metric which is defined
by $d(x,y)=p^{-n(x,y)}$.

\begin{thm}\cite{FL2}\label{xit} Let $(X,J_f,f)$ be a transitive $p$-adic weak repeller with incidence matrix $A$.
Then the dynamics $(J_f,f,|\cdot|_p)$ is isometrically conjugate to
the shift dynamics $(\Sigma_A,\sigma,d_f)$.
\end{thm}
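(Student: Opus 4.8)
The plan is to construct an explicit coding (itinerary) map and verify it is the desired conjugacy. First I would define $\pi\colon J_f\to\Sigma_A$ as follows: for $x\in J_f$ we have $f^n(x)\in X$ for every $n\geq0$, and since $X=\bigcup_{j\in I}B_r(a_j)$ is a \emph{disjoint} union of balls, there is a unique index $x_n\in I$ with $f^n(x)\in B_r(a_{x_n})$; set $\pi(x)=(x_n)_{n\geq0}$. To see that $\pi(x)\in\Sigma_A$ I would check admissibility: from $f^n(x)\in B_r(a_{x_n})$ and $f^{n+1}(x)=f(f^n(x))\in B_r(a_{x_{n+1}})$ we get $B_r(a_{x_{n+1}})\cap f(B_r(a_{x_n}))\neq\varnothing$, whence by the expansiveness together with the ultra-metric property $B_r(a_{x_{n+1}})\subset f(B_r(a_{x_n}))$, i.e. $x_{n+1}\in I_{x_n}$ and $A_{x_n,x_{n+1}}=1$. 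By construction the itinerary of $f(x)$ is the left shift of that of $x$, so $\pi\circ f=\sigma\circ\pi$; thus $\pi$ semiconjugates $f$ to $\sigma$.

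Next I would prove that $\pi$ is a bijection. For injectivity, note that along an itinerary the relation \eqref{tau} gives, inductively, that $f^n$ multiplies $p$-adic distances on the cylinder $B_r(a_{x_0})\cap f^{-1}(B_r(a_{x_1}))\cap\cdots\cap f^{-(n-1)}(B_r(a_{x_{n-1}}))$ by the factor $p^{\tau_{x_0}+\cdots+\tau_{x_{n-1}}}$; since these cumulative exponents tend to $+\infty$ along any admissible path, the diameters of the cylinders shrink to $0$, so two points sharing an itinerary must coincide. For surjectivity, given an admissible sequence $(x_n)_{n\geq0}\in\Sigma_A$ I would form the nested sets $C_n=B_r(a_{x_0})\cap f^{-1}(B_r(a_{x_1}))\cap\cdots\cap f^{-n}(B_r(a_{x_n}))$; admissibility (namely $A_{x_i,x_{i+1}}=1$, which means $B_r(a_{x_{i+1}})\subset f(B_r(a_{x_i}))$) guarantees that each $C_n$ is a non-empty compact set with $C_{n+1}\subset C_n$, so $\bigcap_n C_n\neq\varnothing$. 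Any point of this intersection lies in $J_f$ and realizes the itinerary $(x_n)$.

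Finally I would verify that $\pi$ is an isometry from $(J_f,|\cdot|_p)$ onto $(\Sigma_A,d_f)$, which simultaneously gives continuity of $\pi$ and $\pi^{-1}$ and finishes the proof. Take $u,v\in J_f$ with $\pi(u)=(x_n)$, $\pi(v)=(y_n)$ and put $n=\min\{i\geq0:x_i\neq y_i\}$. If $n=0$ then $u\in B_r(a_{x_0})$, $v\in B_r(a_{y_0})$ with $x_0\neq y_0$, so $|u-v|_p=|a_{x_0}-a_{y_0}|_p=p^{-\kappa(x_0,y_0)}$, matching the definition of $d_f$. If $n\geq1$, the multiplicative scaling from \eqref{tau} gives $|f^n(u)-f^n(v)|_p=p^{\tau_{x_0}+\cdots+\tau_{x_{n-1}}}|u-v|_p$, while $f^n(u)\in B_r(a_{x_n})$ and $f^n(v)\in B_r(a_{y_n})$ with $x_n\neq y_n$ force $|f^n(u)-f^n(v)|_p=|a_{x_n}-a_{y_n}|_p=p^{-\kappa(x_n,y_n)}$; combining these reproduces exactly the formula defining $d_f$. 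The main obstacle I anticipate is making the multiplicative scaling of $f^n$ fully rigorous: one must check that each iterate $f^i(u),f^i(v)$ stays inside the \emph{same} ball $B_r(a_{x_i})$ where \eqref{tau} is valid and then chain the local expansion factors correctly along the orbit; and, in the genuinely \emph{weak} case where some $\tau_j$ may vanish, one must invoke transitivity of $A$ to ensure the cumulative exponents still diverge, so that injectivity and the shrinking of cylinders are preserved.
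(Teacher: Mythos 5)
This theorem is imported from \cite{FL2}; the paper under review states it without proof, so there is no internal argument to compare against. Your itinerary-map construction is the standard proof and is essentially the one given in that reference: coding each point of $J_f$ by the unique ball of the disjoint cover containing each iterate, admissibility via the upgrade of ``nonempty intersection'' to ``containment'' (which uses that $f(B_r(a_j))$ is a ball of radius $p^{\tau_j}r\geq r$), bijectivity via shrinking nonempty cylinders, and the scaling computation $|f^n(u)-f^n(v)|_p=p^{\tau_{x_0}+\cdots+\tau_{x_{n-1}}}|u-v|_p$, which reproduces exactly the definition of $d_f$ and hence gives the isometry. The one step you flag as an anticipated obstacle --- divergence of the cumulative exponents along \emph{every} admissible path when some $\tau_j=0$ --- does close the way you suspect: if $\tau_j=0$ then $f(B_r(a_j))$ is again a ball of radius $r$, and an ultrametric ball of radius $r$ cannot contain two disjoint balls of radius $r$, so such a $j$ has a unique successor; consequently an admissible path that eventually avoids all expanding indices is trapped in a cycle of zero-expansion indices, and irreducibility would then force all $\tau_j=0$, contradicting the weak-repeller hypothesis. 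With that observation made explicit, your proof is complete; for the application in this paper the issue is moot anyway, since Proposition \ref{pro_vashshe} shows both expansion exponents are strictly positive under the standing hypotheses.
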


\section{{\it Proof of Theorem \ref{thm_mainasos}}: part $(A)$}

In what follows, we always assume that $p\geq3$ and $|q|_p<1$.
To prove Theorem \ref{thm_mainasos} $(A)$ we need the following
auxiliary fact.

\begin{lemma}\label{yaxshilemma}
Let $p\geq3$ and $k\in\mathbb N$. If $a\in\mathcal E_p$ and $|a-1|_p\geq|k|_p$ then
$|x^k-a|_p\geq|a-1|_p$
for
any $x\in\mathbb Q_p$.
\end{lemma}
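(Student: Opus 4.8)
The plan is to bound $|x^k-a|_p$ from below by splitting into cases according to $|x|_p$, the only delicate range being $|x|_p=1$. Throughout I would use two consequences of $a\in\mathcal E_p$: since $p\ge 3$ one has $\mathcal E_p=1+p\mathbb Z_p$, so $|a|_p=1$ and $|a-1|_p\le p^{-1}<1$. If $|x|_p>1$ then $|x^k|_p=|x|_p^k>1=|a|_p$, so the strong triangle inequality gives $|x^k-a|_p=|x^k|_p>1>|a-1|_p$; if $|x|_p<1$ then $|x^k|_p<1=|a|_p$ forces $|x^k-a|_p=1>|a-1|_p$. In both cases the inequality holds with strict room, so the entire content of the lemma lies in the unit case.

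For $x\in\mathbb Z_p^*$ I would invoke the Teichm\"uller decomposition $x=\omega u$, where $\omega$ is the $(p-1)$-th root of unity with $\omega\equiv x\pmod{p}$ and $u:=x/\omega\in\mathcal E_p$; then $x^k=\omega^k u^k$ with $u^k\in\mathcal E_p$ by Lemma \ref{epproperty}$(a)$. Everything hinges on whether $\omega^k=1$. If $\omega^k\ne1$, then since the reduction map on $(p-1)$-th roots of unity is injective we have $\omega^k\not\equiv1\pmod{p}$, hence $x^k\equiv\omega^k\not\equiv1\pmod{p}$ and $|x^k-1|_p=1$; as $|a-1|_p<1$, writing $x^k-a=(x^k-1)-(a-1)$ and using the isosceles property yields $|x^k-a|_p=1\ge|a-1|_p$. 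If $\omega^k=1$, then $x^k=u^k\in\mathcal E_p$, and Corollary \ref{coralb} applied with $\alpha=u$, $\beta=1$ gives $|x^k-1|_p=|u^k-1|_p=|k|_p\,|u-1|_p$. Since $|u-1|_p\le p^{-1}<1$ this is $<|k|_p\le|a-1|_p$, so once more the isosceles property gives $|x^k-a|_p=|a-1|_p$.

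The crux is the last subcase. The geometric point is that even when $x$ is far from $1$ (that is, $x\not\equiv1\pmod{p}$), as soon as $\omega^k=1$ the power $x^k$ returns to $\mathcal E_p$ and is in fact closer to $1$ than $|k|_p$, whereas $a$ sits at distance $|a-1|_p\ge|k|_p$ from $1$; thus $a$ can never be approximated by a $k$-th power to within $|a-1|_p$. The hypothesis $|a-1|_p\ge|k|_p$ enters precisely at this step and nowhere else, and its necessity is transparent from the estimate: Corollary \ref{coralb} controls $|x^k-1|_p$ only by the factor $|k|_p$, so were $a$ nearer to $1$ than $|k|_p$, the bound $|x^k-a|_p\ge|a-1|_p$ would fail for $x^k$ sufficiently close to $1$.
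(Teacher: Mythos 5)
Your proof is correct, and while it follows the same overall skeleton as the paper's (trichotomy on $|x|_p$, with the non-unit cases dispatched by the strong triangle inequality and the unit case split according to whether $x^k\equiv 1\pmod p$), the decomposition you use in the unit case is genuinely different and cleaner. The paper factors a unit $x$ through its leading digit $x_0\in\{2,\dots,p-1\}$, so that $x/x_0\in\mathcal E_p$; this reduces the problem to estimating $|x_0^k-1|_p$, and in the delicate subcase $|x_0^k-1|_p<1$ the paper must write $k=mp^s$, invoke Fermat's little theorem to get $|x_0^m-1|_p<1$, and apply Corollary~\ref{coralb} once more to $(x_0^m)^{p^s}$ in order to conclude $|x_0^k-1|_p<|k|_p$. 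You instead factor $x=\omega u$ through the Teichm\"uller representative $\omega$, and the point of doing so is that when $\omega^k=1$ the root-of-unity factor disappears \emph{exactly}, leaving $x^k=u^k$ with $u\in\mathcal E_p$; a single application of Corollary~\ref{coralb} then gives $|x^k-1|_p=|k|_p|u-1|_p<|k|_p\le|a-1|_p$, and the entire $k=mp^s$ detour is unnecessary. The price is that you must know (or prove) the existence and mod-$p$ injectivity of the $(p-1)$-th roots of unity in $\mathbb Z_p$, which is standard (Hensel); the paper's route uses only the digit expansion and Fermat. Your closing remark correctly isolates the unique point where the hypothesis $|a-1|_p\ge|k|_p$ is used, which matches the paper.
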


\begin{proof} Take an arbitrary $a\in\mathcal E_p$ such that
$|a-1|_p\geq|k|_p$. Now consider several cases.\\
{\it Case $x\not\in\mathbb Z_p^*$.} Then we immediately
get $|x^k-1|_p\geq1$. From $|a-1|_p<1$, using the strong triangle inequality one has
$|x^k-a|_p\geq1$. This yields that $|x^k-a|_p>|a-1|_p$.\\
{\it Case $x\in\mathcal E_p$.} Then noting $|x-1|_p<1$, due to Corollary \ref{coralb} we obtain
$|x^k-1|_p<|k|_p$. The last one together with $|a-1|_p\geq|k|_p$ implies that
$|x^k-a|_p=|a-1|_p$.\\
{\it Case $x\in\mathbb Z_p^*\setminus\mathcal E_p$.} In this case, $x$ has the following canonical
form:
$$
x=x_0+x_1\cdot p+x_2\cdot p^2+\dots
$$
where $2\leq x_0\leq p-1$ and $0\leq x_i\leq p-1$, $i\geq1$. Then we obtain
$\frac{x}{x_0}\in\mathcal E_p$. According to Corollary \ref{coralb} one has
$$
\left(\frac{x}{x_0}\right)^k=1+O[k(x-x_0)]=1+o[k].
$$
Consequently,
$|x^k-x_0^k|_p<|k|_p$ which yields $|x^k-1|_p=|x_0^k-1|_p$.
Now we need to check two cases $|x_0^k-1|_p=1$ and $|x_0^k-1|_p<1$, separately.

Suppose that $|x_0^k-1|_p=1$. Then, due to  $|a-1|_p<1$ one has
$|x_0^k-a|_p=1$. Hence, $|x^k-a|_p>|a-1|_p$.

Let us assume that $|x_0^k-1|_p<1$. For convenience, let us  write $k=mp^s$, where $s\geq1$ and $(m,p)=1$.
Then noting $x_0^{p}\equiv x_0(\operatorname{mod }p)$, from $x^{mp^s}\equiv1(\operatorname{mod }p)$ we obtain $|x_0^m-1|_p<1$.
Thanks to Corollary \ref{coralb} one finds
$$
x_0^{mp^s}-1=p^{s}(x_0^m-1)+o\left[p^s(x_0^m-1)\right],
$$
which
yields that $|x_0^k-1|_p<|k|_p$. Hence, from $|a-1|_p\geq|k|_p$ we obtain
$|x_0^k-a|_p=|a-1|_p$. Consequently, $|x^k-a|_p=|a-1|_p$.
This completes the proof.
\end{proof}

\begin{rk}
We notice that the set $\mathcal P_{x^{(\infty)}}$ is empty if $|k|_p\leq|q+\theta-1|_p$.
Indeed, from $x^{(\infty)}\in\mathcal E_p$, where $x^{(\infty)}=2-q-\theta$ and $|x^{(\infty)}-1|_p\geq|k|_p$, due to Lemma
\ref{yaxshilemma} we infer that
$$
\left|f_\theta^n(x)-x^{(\infty)}\right|_p\geq\left|x^{(\infty)}-1\right|_p,\ \ \ \forall n\in\mathbb N,\ \forall x\in Dom(f_\theta).
$$
Hence, noting $x^{(\infty)}\neq1$ we can conclude that $\mathcal P_{x^{(\infty)}}=\emptyset$.
\end{rk}
Let us define
\begin{equation}\label{g(x)h(x)}
g_{\theta,q}(x)=\frac{\theta x+q-1}{x+q+\theta-2}
\end{equation}
We notice that $f_{\theta,q,k}(x)=\left(g_{\theta,q}(x)\right)^k$ for any $x\in Dom(f_{\theta,q,k})$.
It is clear that the function $f_{\theta,q,k}$ has a fixed point $x_0^*=1$.

\begin{proof}[Proof of Theorem \ref{thm_mainasos}: $(A)$]
Let $|k|_p\leq|q+\theta-1|_p$. Let us denote
$$
\begin{array}{ll}
K_1=\left\{x\in\mathbb Q_p: |x-1|_p<|q+\theta-1|_p\right\},\\[2mm]
K_2=\left\{x\in\mathbb Q_p: |x-1|_p=|x-2+q+\theta|_p\right\}.
\end{array}
$$
First, we show that $f_{\theta,q,k}(x)\in K_1\cup K_2$ for any $x\notin K_1\cup K_2$. Then
we prove that $f_{\theta,q,k}(x)\in K_1$ for any $x\in K_2$. Finally, we show that
$f_{\theta,q,k}^n(x)\to1$ for any $x\in K_1$.

Indeed,
let $x\notin K_1\cup K_2$.
From $|q+\theta-1|_p<1$, due to Lemma \ref{yaxshilemma}
we obtain
$$
|f_{\theta,q,k}(x)-2+q+\theta|_p\geq|q+\theta-1|_p,
$$
which is equivalent to either $|f_{\theta,q,k}(x)-1|_p<|q+\theta-1|_p$ or $|f_{\theta,q,k}(x)-1|_p=|f_{\theta,q,k}(x)-2+q+\theta|_p$.
This yields that $f_{\theta,q,k}(x)\in K_1\cup K_2$.

Now assume that $x\in K_2$. Then we have
\begin{eqnarray*}
g_{\theta,q}(x)&=&1+\frac{(\theta-1)(x-1)}{x+q+\theta-2}\nonumber\\
&=&1+(\theta-1)O[1]\nonumber\\
&=&1+O[\theta-1]\nonumber\\
&=&1+o[1]
\end{eqnarray*}
this means $g_{\theta,q}(x)\in\mathcal E_p$. Then thanks
to Corollary \ref{coralb} one gets
$$
|f_{\theta,q,k}(x)-1|_p<|k|_p.
$$
The last one together with $|k|_p\leq|q+\theta-1|_p$ implies $|f_{\theta,q,k}(x)-1|_p<|q+\theta-1|_p$, hence
$f_{\theta,q,k}(x)\in K_1$.

Finally, we suppose that $x\in K_1$. One has
\begin{eqnarray*}
g_{\theta,q}(x)&=&1+\frac{(\theta-1)(x-1)}{x+q+\theta-2}\nonumber\\
&=&1+\frac{(\theta-1)(x-1)}{O[q+\theta-1]}\nonumber\\
&=&1+(\theta-1)o[1]\nonumber\\
&=&1+o[\theta-1]\nonumber\\
&=&1+o[1]
\end{eqnarray*}
This again means $g_{\theta,q}(x)\in\mathcal E_p$. Then Corollary \ref{coralb} implies
$$
f_{\theta,q,k}(x)-1=O\left[\frac{k(\theta-1)(x-1)}{q+\theta-1}\right].
$$
Noting $|q+\theta-1|_p>|k(\theta-1)|_p$, from the last one, we obtain
$$
|f_{\theta,q,k}(x)-1|_p<|x-1|_p.
$$
Hence,
$$
|f_{\theta,q,k}^n(x)-1|_p\leq\frac{1}{p^n}|x-1|_p.
$$
which yields that $f_{\theta,q,k}^n(x)\to1$ as $n\to\infty$.

This completes the proof.
\end{proof}

\section{Proof of Theorem \ref{thm_mainasos}: part (B)}

In this section, we are going to study
the dynamics of $f_{\theta,q,k}$ when $|\theta-1|_p<|q^2|_p$ and $|q|_p<|k|_p$. In what follows, we need the following auxiliary fact.

\begin{pro}\label{pro_limits}
Let $p\geq3$ and $|\theta-1|_p<|q|_p<|k|_p$. If $x\in Dom(f_{\theta,q,k})$ with
$|x-2+q+\theta|_p>|\theta-1|_p$ then
$f_{\theta,q,k}^n(x)\to1$ as $n\to\infty$.
\end{pro}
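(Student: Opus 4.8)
The plan is to exhibit a forward-invariant ball around the fixed point $x_0^*=1$ on which $f_{\theta,q,k}$ strictly contracts, and then to show that every point of the hypothesis region lands in that ball after a single iteration. Throughout I use the identity coming from \eqref{g(x)h(x)},
$$
g_{\theta,q}(x)-1=\frac{(\theta-1)(x-1)}{x-x^{(\infty)}},\qquad x-x^{(\infty)}=x+q+\theta-2,
$$
where $x^{(\infty)}=2-q-\theta$, together with the observation that $x^{(\infty)}-1=-(q+\theta-1)$ has norm $|q|_p$ (since $|\theta-1|_p<|q|_p$). Put $R_1=\{x\in\mathbb Q_p:\ |x-1|_p<|q|_p\}$, and recall every $x$ in the proposition's region satisfies $|x-x^{(\infty)}|_p>|\theta-1|_p$.

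First I would show that the whole hypothesis region is mapped into $R_1$. Fix $x$ with $|x-x^{(\infty)}|_p>|\theta-1|_p$. By the ultrametric inequality $|x-1|_p\le\max\{|x-x^{(\infty)}|_p,|q|_p\}$, so
$$
|g_{\theta,q}(x)-1|_p=\frac{|\theta-1|_p\,|x-1|_p}{|x-x^{(\infty)}|_p}\le\max\left\{\,|\theta-1|_p,\ \frac{|\theta-1|_p\,|q|_p}{|x-x^{(\infty)}|_p}\,\right\}<|q|_p,
$$
the final strict inequality using $|\theta-1|_p<|q|_p$ and $|x-x^{(\infty)}|_p>|\theta-1|_p$. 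In particular $|g_{\theta,q}(x)-1|_p<1$, i.e. $g_{\theta,q}(x)\in\mathcal E_p$. Applying Corollary \ref{coralb} with $\alpha=g_{\theta,q}(x)$ and $\beta=1$ gives $f_{\theta,q,k}(x)-1=g_{\theta,q}(x)^k-1=k\bigl(g_{\theta,q}(x)-1\bigr)+o\bigl[k(g_{\theta,q}(x)-1)\bigr]$, whence $|f_{\theta,q,k}(x)-1|_p=|k|_p\,|g_{\theta,q}(x)-1|_p<|q|_p$ (using $|k|_p\le1$). Thus $f_{\theta,q,k}(x)\in R_1$.

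Next I would verify that $R_1$ is forward-invariant and contracting. For $x\in R_1$ we have $|x-1|_p<|q|_p=|q+\theta-1|_p$, so $|x-x^{(\infty)}|_p=|(x-1)+(q+\theta-1)|_p=|q|_p>|\theta-1|_p$; in particular $x$ lies in the hypothesis region and is never the pole, so all iterates stay in $Dom(f_{\theta,q,k})$. Repeating the computation with $|x-x^{(\infty)}|_p=|q|_p$ now yields the exact estimate $|f_{\theta,q,k}(x)-1|_p=c\,|x-1|_p$, where $c=|k|_p\,|\theta-1|_p/|q|_p$. Since $|\theta-1|_p<|q|_p$ forces $|\theta-1|_p/|q|_p\le p^{-1}$ and $|k|_p\le1$, we get $c\le p^{-1}<1$; hence $f_{\theta,q,k}(R_1)\subset R_1$ and $|f_{\theta,q,k}^{\,n}(x)-1|_p\le c^{\,n}|x-1|_p\to0$. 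Combining the two steps, for arbitrary $x$ in the hypothesis region one has $f_{\theta,q,k}(x)\in R_1$ and therefore $f_{\theta,q,k}^{\,n}(x)\to1$.

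I expect the only genuinely delicate point to be the uniform bound $|g_{\theta,q}(x)-1|_p<|q|_p$ in the first step for points $x$ far from $1$ (where $|x-1|_p$ is large): there the large numerator is exactly cancelled by the large denominator $|x-x^{(\infty)}|_p$, and the ultrametric split $|x-1|_p\le\max\{|x-x^{(\infty)}|_p,|q|_p\}$ is precisely what makes this transparent and avoids a clumsy case analysis. Everything else is routine ultrametric bookkeeping; I note that only $|\theta-1|_p<|q|_p$ and $|k|_p\le1$ are actually used here, the remaining standing hypotheses of the section not being needed for this statement.
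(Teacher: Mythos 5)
Your proof is correct and follows the same overall strategy as the paper's: funnel every point of the hypothesis region into the ball $B_{|q|_p}(1)$ after one iteration, then contract there using Corollary \ref{coralb} and the fact that $|x-2+q+\theta|_p=|q|_p$ on that ball. The only difference is cosmetic but welcome: where the paper establishes the one-step entry by a three-way case analysis on $|x-1+q|_p$ versus $|q|_p$, your single ultrametric estimate $|x-1|_p\le\max\{|x-x^{(\infty)}|_p,|q|_p\}$ combined with $|x-x^{(\infty)}|_p>|\theta-1|_p$ yields the uniform bound $|g_{\theta,q}(x)-1|_p<|q|_p$ in one stroke.
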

\begin{proof}
First, we notice that $|x-2+q+\theta|_p>|\theta-1|_p$ implies $|x-1+q|_p>|\theta-1|_p$.
Due to $|\theta-1|_p<|q|_p$, we are going to consider two cases: $(i)$ $|x-1+q|_p\geq|q|_p$ and
$(ii)$ $|\theta-1|_p<|x-1+q|_p<|q|_p$, respectively.

{\it Case $(i)$.} Let $|x-1+q|_p\geq|q|_p$. This means that
either $x\in B_{|q|_p}(1)$ or $|x-1+q|_p=|x-1|_p$. So, let us consider each case one by one.
First, we show that the condition
$|x-1+q|_p=|x-1|_p$ yields $f_{\theta,q,k}(x)\in B_{|q|_p}(1)$. Then we prove
that
$f_{\theta,q,k}^n(x)\to1$ for any $x\in B_{|q|_p}(1)$.

Let us pick $x\in\mathbb Q_p$ with $|x-1+q|_p=|x-1|_p$. This yields that
$|x-1|_p\geq|q|_p$, so from $|\theta-1|_p<|q|_p$ one gets
\begin{eqnarray*}
g_{\theta,q}(x)-1&=&\frac{(\theta-1)(x-1)}{x-1+q+\theta-1}\\[2mm]
&=&\frac{(\theta-1)(x-1)}{x-1+o[q]}
=\frac{(\theta-1)(x-1)}{O[x-1]}\\[2mm]
&=&(\theta-1)O[1]
=O[\theta-1]
\end{eqnarray*}
Since $|\theta-1|_p<|q|_p$ and $|k|_p\leq1$, due to Corollary \ref{coralb} we
obtain $|f_{\theta,q,k}(x)-1|_p<|q|_p$. This means that $f_{\theta,q,k}(x)\in B_{|q|_p}(1)$.

Now let us suppose that $x\in B_{|q|_p}(1)$. Then we have
\begin{eqnarray*}
g_{\theta,q}(x)-1&=&\frac{o[q](x-1)}{q+o[q]}
=\frac{o[q](x-1)}{O[q]}\\[2mm]
&=&o[1](x-1)=o[x-1]
\end{eqnarray*}
Hence, again thanks to Corollary \ref{coralb} one has $|f_{\theta,q,k}(x)-1|_p<|x-1|_p$, which implies
$$
|f_{\theta,q,k}^n(x)-1|_p\leq\frac{1}{p^n}|x-1|_p,\ \ \ \forall n\in\mathbb N.
$$
So, $f_{\theta,q,k}^n(x)\to1$ as $n\to\infty$.

{\it Case $(ii)$} Let $|\theta-1|_p<|x-1+q|_p<|q|_p$. Then
\begin{eqnarray*}
g_{\theta,q}(x)-1&=&\frac{o[x-1+q]O[q]}{O[x-1+q]}\\[2mm]
&=&o[1]O[q]=o[q].
\end{eqnarray*}
Then again Corollary \ref{coralb} yields
$|f_{\theta,q,k}(x)-1|_p<|q|_p$. Hence, by $(i)$ we have $f_{\theta,q,k}^n(x)\to1$ as $n\to\infty$.
This completes the proof.
\end{proof}

\begin{cor}\label{corssilka}
Let $p\geq3$ and $|\theta-1|_p<|q|_p<|k|_p$. If $|x-1+q|_p\geq|q|_p$
then $f_{\theta,q,k}^n(x)\to1$ as $n\to\infty$.
\end{cor}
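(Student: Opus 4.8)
The plan is to recognize that this corollary is nothing more than an isolation of Case $(i)$ from the proof of Proposition \ref{pro_limits}, packaged separately so that it can be invoked directly later (for instance in the construction of the Markov partition in part (B)). The first thing I would check is that the corollary's hypothesis is indeed a special instance of the proposition's standing assumption. Writing $x-2+q+\theta=(x-1+q)+(\theta-1)$ and using $|x-1+q|_p\geq|q|_p>|\theta-1|_p$, the ultrametric inequality gives
$$
|x-2+q+\theta|_p=|x-1+q|_p\geq|q|_p>|\theta-1|_p.
$$
In particular $|x-2+q+\theta|_p>0$, so automatically $x\in Dom(f_{\theta,q,k})$, and the hypothesis $|x-2+q+\theta|_p>|\theta-1|_p$ of Proposition \ref{pro_limits} holds. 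Hence the corollary follows at once by applying the proposition.

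Alternatively, and perhaps more transparently, I would simply observe that the condition $|x-1+q|_p\geq|q|_p$ is \emph{verbatim} the defining condition of Case $(i)$ in the proof of Proposition \ref{pro_limits}. That case was treated by splitting into the sub-case $x\in B_{|q|_p}(1)$, where the estimate $g_{\theta,q}(x)-1=o[x-1]$ together with Corollary \ref{coralb} yields the strict contraction $|f_{\theta,q,k}(x)-1|_p<|x-1|_p$ and hence $f_{\theta,q,k}^n(x)\to1$; and the sub-case $|x-1+q|_p=|x-1|_p$, where one first shows $f_{\theta,q,k}(x)\in B_{|q|_p}(1)$ and then reduces to the previous sub-case. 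Both sub-cases were carried out in full there, and neither used anything beyond $|\theta-1|_p<|q|_p<|k|_p$ and $|x-1+q|_p\geq|q|_p$. Therefore the conclusion $f_{\theta,q,k}^n(x)\to1$ transfers immediately.

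As for the main obstacle: there genuinely is none here. The only point requiring a moment of care is to confirm that the argument of Case $(i)$ never invoked the proposition's full hypothesis in a way that would be lost when we replace $|x-2+q+\theta|_p>|\theta-1|_p$ by $|x-1+q|_p\geq|q|_p$; inspecting that case shows it relied solely on the latter condition (and on $|\theta-1|_p<|q|_p<|k|_p$), so the extraction is legitimate and the proof reduces to a single citation of Case $(i)$ of Proposition \ref{pro_limits}.
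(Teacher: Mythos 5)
Your first argument is exactly the paper's proof: the paper likewise writes $x-2+q+\theta=(x-1+q)+(\theta-1)$ and uses the strong triangle inequality with $|x-1+q|_p\geq|q|_p>|\theta-1|_p$ to verify the hypothesis of Proposition \ref{pro_limits} and then cites it. The proposal is correct and essentially identical to the paper's proof.
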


\begin{proof} Let $|x-1+q|_p\geq|q|_p$. Then
from $|\theta-1|_p<|q|_p$ using the strong triangle inequality
one finds $|x-2+q+\theta|_p>|\theta-1|_p$ which thanks to Proposition
\ref{pro_limits} yields $f_{\theta,q,k}^n(x)\to1$.
\end{proof}

Let us denote
$$
Sol_p(x^k-1)=\left\{\xi\in\mathbb Z_p^*: \xi^k=1\right\},\ \ \ \kappa_p=|Sol_p(x^k-1)|.
$$
By another words,  $\kappa_p$ is the number of solutions of the equation $x^k=1$ in $\bq_p$. From the results of \cite{MS13} we infer that 
$\kappa_p$ is the GCF (greatest common factor) of $k$ and $p-1$.  Therefore, 
it is clear that $1\leq\kappa_p\leq k$.

For a given $\xi_i\in Sol_p(x^k-1),\ i\in\{1,\dots,\kappa_p\}$ we define
\begin{equation}\label{x_i}
{x}_{\xi_i}=\left\{
\begin{array}{ll}
1-q+(k-1)\left(1-\frac{q}{2}+\frac{(k-2)q^2}{6k}\right)(\theta-1), & \mbox{if }\ \xi_i=1\\[3mm]
2-q-\theta+\frac{q}{1-\xi_i}(\theta-1), & \mbox{if }\ \xi_i\neq1
\end{array}\right.
\end{equation}
and
\begin{equation}\label{X}
X=\bigcup\limits_{i=1}^{\kappa_p}B_r({x}_{\xi_i}),\ \ r=|q(\theta-1)|_p.
\end{equation}

\begin{lemma} Let $p\geq3$ and $|\theta-1|_p<|q|_p<|k|_p$. If $x_{\xi_i}$ is given by
\eqref{x_i} and $r=|q(\theta-1)|_p$ then
$B_r({x}_{\xi_i})\cap B_r({x}_{\xi_j})=\emptyset$ if $i\neq j$.
\end{lemma}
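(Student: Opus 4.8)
The plan is to use the standard ultrametric dichotomy: two open balls $B_r(a)$ and $B_r(b)$ of the same radius are either equal or disjoint, and they are disjoint precisely when $|a-b|_p\geq r$. Thus it suffices to prove that $|x_{\xi_i}-x_{\xi_j}|_p\geq r=|q(\theta-1)|_p$ for every pair $i\neq j$. I would split the verification according to whether $1$ is one of $\xi_i,\xi_j$.

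First I would treat the case where both $\xi_i\neq 1$ and $\xi_j\neq 1$. Subtracting the two instances of the second line of \eqref{x_i}, the terms $2-q-\theta$ cancel, and I get
$$
x_{\xi_i}-x_{\xi_j}=q(\theta-1)\left(\frac{1}{1-\xi_i}-\frac{1}{1-\xi_j}\right)=q(\theta-1)\frac{\xi_i-\xi_j}{(1-\xi_i)(1-\xi_j)}.
$$
Since the elements of $Sol_p(x^k-1)$ lie in $\mathbb Z_p^*$ and, being roots of unity of order dividing $p-1$, reduce to distinct residues modulo $p$, I have $|\xi_i-\xi_j|_p=1$ and $|1-\xi_i|_p=|1-\xi_j|_p=1$. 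Hence the fractional factor is a $p$-adic unit and $|x_{\xi_i}-x_{\xi_j}|_p=|q(\theta-1)|_p=r$, which already yields disjointness of the two open balls of radius $r$.

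Next, the mixed case, say $\xi_i=1$ and $\xi_j\neq 1$. Writing both centers relative to $2-q-\theta$ and factoring out $(\theta-1)$, I obtain
$$
x_1-x_{\xi_j}=(\theta-1)\left[1+(k-1)\left(1-\frac{q}{2}+\frac{(k-2)q^2}{6k}\right)-\frac{q}{1-\xi_j}\right].
$$
The key point is that the bracket equals $k$ plus strictly lower-order terms: expanding gives $1+(k-1)\cdot 1=k$, while each of the remaining summands, namely $\frac{(k-1)q}{2}$, $\frac{(k-1)(k-2)q^2}{6k}$ and $\frac{q}{1-\xi_j}$, has $p$-adic norm at most $|q|_p$. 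Since $|q|_p<|k|_p$ by hypothesis, the strong triangle inequality forces the bracket to have norm exactly $|k|_p$, so $|x_1-x_{\xi_j}|_p=|k(\theta-1)|_p>|q(\theta-1)|_p=r$, which is even stronger than needed.

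The only delicate point is the norm bound on $\frac{(k-1)(k-2)q^2}{6k}$ in the mixed case, where the denominators $6$ and $k$ require care: for $p=3$ one has $|6|_3=1/3$, so I would check explicitly that
$$
\left|\frac{(k-1)(k-2)q^2}{6k}\right|_p=\frac{|k-1|_p|k-2|_p|q|_p^2}{|6|_p|k|_p}\leq |q|_p,
$$
which follows from $|q|_p<|k|_p$ (so that $|q|_p^2/|k|_p\leq |q|_p$, the residual factor of $3$ at $p=3$ being absorbed because $|q|_p/|k|_p\leq 1/p$). Once this bound is in place both cases conclude at once, and the ultrametric dichotomy delivers the asserted pairwise disjointness of the balls $B_r(x_{\xi_i})$.
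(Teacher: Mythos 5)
Your proof is correct and follows essentially the same route as the paper: the same ultrametric dichotomy, the same two-case split according to whether one of $\xi_i,\xi_j$ equals $1$, and the same computations showing $|x_{\xi_i}-x_{\xi_j}|_p=|q(\theta-1)|_p$ in one case and $|k(\theta-1)|_p>r$ in the other. You are merely more explicit than the paper at two points it absorbs into its $O[\cdot]$ notation, namely that distinct roots of unity in $\mathbb{Z}_p^*$ satisfy $|\xi_i-\xi_j|_p=|1-\xi_j|_p=1$, and the norm bound on the term with denominator $6k$ when $p=3$.
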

\begin{proof} Let $x_{\xi_i}$ and $x_{\xi_j}$ be given by \eqref{x_i}, where $i\neq j$.
We consider two cases.\\
{\it Case $\xi_i=1$ and $\xi_j\neq1$.} Then from \eqref{x_i} one finds
\begin{eqnarray*}
x_{\xi_i}-x_{\xi_j}&=&\left(k-\frac{(k-1)q}{2}+\frac{(k-1)(k-2)q^2}{6k}-\frac{q}{1-\xi_j}\right)(\theta-1)\\
&=&(k+o[k])(\theta-1)\\
&=&O[k(\theta-1)],
\end{eqnarray*}
which implies that $|x_{\xi_i}-x_{\xi_j}|_p>|q(\theta-1)|_p$. Hence, $B_r({x}_{\xi_i})\cap B_r({x}_{\xi_j})=\emptyset$.\\
{\it Case $\xi_i\neq1$ and $\xi_j\neq1$.} In this case, we have
\begin{eqnarray*}
x_{\xi_i}-x_{\xi_j}&=&\frac{(\xi_i-\xi_j)q(\theta-1)}{(1-\xi_i)(1-\xi_j)}\\
&=&\frac{O[1]q(\theta-1)}{O[1]}\\
&=&O[q(\theta-1)],
\end{eqnarray*}
this means $|x_{\xi_i}-x_{\xi_j}|_p=|q(\theta-1)|_p$. Since $r=|q(\theta-1)|_p$, we infer that
$B_r({x}_{\xi_i})\cap B_r({x}_{\xi_j})=\emptyset$.
\end{proof}

\begin{pro}\label{prop_asosiy}
Let $p\geq3$ and $|k|_p>|q|_p$. If $|\theta-1|_p<|q^2|_p$ then
$$
\lim\limits_{n\to\infty}f_{\theta,q,k}^n(x)=1, \ \ \forall x\in Dom(f_{\theta,q,k})\setminus X.
$$
\end{pro}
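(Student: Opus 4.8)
The plan is to reduce the whole statement to Proposition~\ref{pro_limits} after a single application of $f_{\theta,q,k}$. Since Proposition~\ref{pro_limits} already yields $f_{\theta,q,k}^n(x)\to1$ whenever $|x-2+q+\theta|_p>|\theta-1|_p$, it suffices to treat points of the ball $D=\{x:|x-x^{(\infty)}|_p\le|\theta-1|_p\}$, where $x^{(\infty)}=2-q-\theta$. A short computation with the centres of \eqref{x_i} shows $X\subset D$: for $\xi_i\ne1$ the ball $B_r(x_{\xi_i})$ lies on the sphere $|x-x^{(\infty)}|_p=r$, while $B_r(x_{\xi_1})$ lies on the sphere $|x-x^{(\infty)}|_p=|k(\theta-1)|_p$, and both spheres sit inside $D$ because $r<|k(\theta-1)|_p\le|\theta-1|_p$. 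Thus the proposition follows from the single claim: \emph{for every $x\in D\setminus X$ one has $|f_{\theta,q,k}(x)-x^{(\infty)}|_p>|\theta-1|_p$}; since then $f_{\theta,q,k}(x)\ne x^{(\infty)}$ lies in the domain and satisfies the hypothesis of Proposition~\ref{pro_limits}, which gives $f_{\theta,q,k}^{n}(x)\to1$.

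The engine of the argument is an exact identity. Writing $\delta=x-x^{(\infty)}$ and $\varepsilon=\theta-1$, the denominator of $g_{\theta,q}$ equals $\delta$, so $g_{\theta,q}(x)-1=\dfrac{\varepsilon(\delta-q-\varepsilon)}{\delta}$; because $|\varepsilon|_p<|q^2|_p$ and $|\delta|_p\le|\varepsilon|_p$ force $|\delta-q-\varepsilon|_p=|q|_p$, this collapses to the clean formula $|g_{\theta,q}(x)-1|_p=r/|\delta|_p$. Hence $g_{\theta,q}$ behaves like an inversion centred at $x^{(\infty)}$, and the natural plan is to partition $D$ according to the size of $|\delta|_p$ relative to the two scales $r=|q(\theta-1)|_p$ and $|k\varepsilon|_p=|k(\theta-1)|_p$ (note $r<|k\varepsilon|_p$ because $|q|_p<|k|_p$).

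The non-critical shells are routine. If $|\delta|_p<r$ then $|g_{\theta,q}(x)-1|_p>1$, so $|f_{\theta,q,k}(x)|_p>1$ and a fortiori $|f_{\theta,q,k}(x)-x^{(\infty)}|_p>1$. If $|\delta|_p>r$ and $|\delta|_p\ne|k\varepsilon|_p$, then $g_{\theta,q}(x)\in\mathcal E_p$ and Corollary~\ref{coralb} gives $|f_{\theta,q,k}(x)-1|_p=|k|_p\,r/|\delta|_p$, which exceeds $|q|_p$ when $|\delta|_p<|k\varepsilon|_p$ and is smaller than $|q|_p$ when $|\delta|_p>|k\varepsilon|_p$; in both cases $|f_{\theta,q,k}(x)-1+q|_p>|\theta-1|_p$, and adding $\varepsilon$ gives $|f_{\theta,q,k}(x)-x^{(\infty)}|_p>|\theta-1|_p$. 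On the sphere $|\delta|_p=r$ one has $g_{\theta,q}(x)\in\mathbb Z_p^*$ with $|g_{\theta,q}(x)-1|_p=1$; using the identity $|g_{\theta,q}(x)-g_{\theta,q}(x_{\xi_i})|_p=|x-x_{\xi_i}|_p/r$ together with $g_{\theta,q}(x_{\xi_i})\equiv\xi_i\ (\mathrm{mod}\ p)$, any $x$ outside every $B_r(x_{\xi_i})$ satisfies $g_{\theta,q}(x)\not\equiv\xi_i\ (\mathrm{mod}\ p)$ for all $\kappa_p$ roots of unity, whence $g_{\theta,q}(x)^k\not\equiv1\ (\mathrm{mod}\ p)$ and $|f_{\theta,q,k}(x)-x^{(\infty)}|_p=1$.

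The hard part is the single remaining sphere $|\delta|_p=|k\varepsilon|_p$, which carries $B_r(x_{\xi_1})$. Here $g_{\theta,q}(x)\in\mathcal E_p$ but $|f_{\theta,q,k}(x)-1|_p=|q|_p$ \emph{exactly}, so $f_{\theta,q,k}(x)-1+q$ sits on the threshold and the crude remainder $o[k(g_{\theta,q}(x)-1)]$ of Corollary~\ref{coralb} is too coarse to decide its norm. I would instead expand $g_{\theta,q}(x)^k=(1+s)^k$ with $s=g_{\theta,q}(x)-1$ by the binomial theorem, keeping the terms $ks$ and $\binom{k}{2}s^2$, which produces $f_{\theta,q,k}(x)-1+q=q\bigl(1-\tfrac{k\varepsilon}{\delta}\bigr)+(\text{lower order})$. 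For $x$ with $|x-x_{\xi_1}|_p>r$ one has $|\delta-k\varepsilon|_p=|x-x_{\xi_1}|_p$, the leading term dominates, and $|f_{\theta,q,k}(x)-x^{(\infty)}|_p=|q|_p\,|x-x_{\xi_1}|_p/|k\varepsilon|_p>|\theta-1|_p$, so $f_{\theta,q,k}(x)$ escapes $D$. The genuinely delicate case, and the main obstacle, is the boundary sphere $|x-x_{\xi_1}|_p=r$: there the linear contribution $q(1-k\varepsilon/\delta)$ and the quadratic contribution $\binom{k}{2}s^2$ have the same $p$-adic size, so one must compute them with their exact rational coefficients — precisely the data recorded in the coefficients $1-\tfrac{q}{2}+\tfrac{(k-2)q^2}{6k}$ that define $x_{\xi_1}$ in \eqref{x_i} — to confirm that the failure of escape is confined to the open ball $B_r(x_{\xi_1})$. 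Carrying the binomial expansion of $g_{\theta,q}(x)^k$ to this second order with explicit control of the remainder, rather than leaning on the black-box estimate of Corollary~\ref{coralb}, is where the real work sits; every other shell is bookkeeping with the strong triangle inequality.
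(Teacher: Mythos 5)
Your overall strategy coincides with the paper's: reduce everything to Proposition \ref{pro_limits} (via Corollary \ref{corssilka}) after a single application of $f_{\theta,q,k}$, and stratify the ball $|x-x^{(\infty)}|_p\le|\theta-1|_p$ by the size of $\delta=x-x^{(\infty)}$. Under the substitution $\delta=\eta(\theta-1)$ your shells $|\delta|_p<r$, $|\delta|_p=r$, and $r<|\delta|_p$ with $|\delta|_p\ne|k(\theta-1)|_p$ are exactly the paper's cases $|\eta|_p<|q|_p$, $|\eta|_p=|q|_p$, and $|k|_p\le|\eta-k|_p\le 1$, and your handling of them is correct.

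The problem is the critical sphere $|\delta|_p=|k(\theta-1)|_p$ with $|x-x_{\xi_1}|_p=r$, which you flag as ``where the real work sits'' and then do not carry out. That verification --- that $|f_{\theta,q,k}(x)-1+q|_p$ stays at least $|q^2/k|_p$ on the boundary sphere of $B_r(x_{\xi_1})$, so that the failure of escape is confined to the open ball --- \emph{is} the content of the proposition at this point; saying one ``must compute the exact rational coefficients to confirm'' it restates the claim rather than proving it. Moreover, the expansion you propose (keeping only $ks$ and $\binom{k}{2}s^2$ with $s=g_{\theta,q}(x)-1$) is too short: since $|s|_p=|q/k|_p$, the cubic term $\binom{k}{3}s^3$ has norm up to $|q^3|_p/\bigl(|6|_p|k|_p^2\bigr)$, which attains the critical size $|q^2/k|_p$ when, for instance, $p=3$ and $|q|_p=\tfrac13|k|_p$. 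This is precisely why the centre $x_{\xi_1}$ in \eqref{x_i} carries the third-order correction $\tfrac{(k-2)q^2}{6k}$, and why the paper's computation \eqref{f4} retains the term $-\tfrac{(k-1)(k-2)q^3}{6k^2}$ before invoking \eqref{f2}, \eqref{ghty} and \eqref{f3} to rule out cancellation below the threshold. To close the argument you must push the expansion to third order in $q/k$ with explicit control of the remainder, exactly as in \eqref{f4}--\eqref{f3}; with only the quadratic truncation the possible cancellation between the linear, quadratic and cubic contributions on the sphere $|x-x_{\xi_1}|_p=r$ is left undecided.
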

\begin{proof} Let $x\in Dom(f_{\theta,q,k})\setminus X$.
According to Proposition \ref{pro_limits} it is enough to establish
$f_{\theta,q,k}^n(x)\to1$ for $|x-2+q+\theta|_p\leq|\theta-1|_p$.
So, we need to consider only a case when $x\in Dom(f_{\theta,q,k})\setminus X$ has the following form:
$$
x=2-q-\theta+\eta(\theta-1),
$$
where $|\eta|_p\leq1$. Then we immediately find
\begin{equation}\label{yordam}
g_{\theta,q}(x)-1=\frac{-q+(\eta-1)(\theta-1)}{\eta}.
\end{equation}
We consider several cases w.r.t $\eta$.

{\bf Case $|\eta|_p<|q|_p$.}  In this case
we have $|g_{\theta,q}(x)-1|_p>1$ which implies $|g_{\theta,q}(x)|_p>1$. Hence, $|f_{\theta,q,k}(x)|_p>1$. From the last
inequality using the strong triangle inequality we find
$|f_{\theta,q,k}(x)-1+q|_p>|q|_p$. Then
due to Corollary \ref{corssilka} we infer that $f_{\theta,q,k}^n(x)\to1$.

{\bf Case $|\eta|_p=|q|_p$.} Then there exists $\eta_*\in\mathbb Z_p^*$ such that
$\eta=q\eta_*$. Since $x\not\in X$ one gets
\begin{equation}\label{f1}
\left(1-\frac{1}{\eta_*}\right)^k-1=O[1].
\end{equation}
From \eqref{yordam} together with $|\theta-1|_p<|q|_p$ we infer
$$
g_{\theta,q}(x)=1-\frac{1}{\eta_*}+o[1].
$$
Hence, 
$$
f_{\theta,q,k}(x)=\left(1-\frac{1}{\eta_*}\right)^k+o[1].
$$
The last one together with \eqref{f1} yields $|f_{\theta,q,k}(x)-1|_p=1$. Again using the strong
triangle inequality we obtain $|f_{\theta,q,k}(x)-1+q|_p>|q|_p$. Consequently,
from Corollary \ref{corssilka} it follows  $f_\theta^n(x)\to1$.\\

{{\bf Case} $|q|_p<|\eta|_p\leq1$.} Note that $|q|_p<|\eta|_p$ implies $x\notin B_r(x_{\xi_i})$ for any $\xi_i^k=1$,
$\xi_i\neq1$. Therefore, from $x\notin X$ one find  
either $|k|_p\leq|\eta-k|_p\leq1$ or
\begin{equation}\label{f2}
|q|_p\leq\left|\eta-k+\frac{(k-1)q}{2}-\frac{(k-1)(k-2)q^2}{6k}\right|_p<|k|_p
\end{equation}
First, let us assume that $|k|_p\leq|\eta-k|_p\leq1$. Then from $|\theta-1|_p<|q^2|_p$ and $|q|_p<|\eta|_p$ one has
$$
g_{\theta,q}(x)=1-\frac{q}{\eta}+o[q],
$$
which yields
$$
f_{\theta,q,k}(x)=1-\frac{kq}{\eta}+o[q].
$$
The last equality implies $|f_{\theta,q,k}(x)-1+q|_p\geq|q|_p$. So, according
to Corollary \ref{corssilka} we have $f_{\theta,q,k}^n(x)\to1$.

Now we suppose that \eqref{f2} holds. Since $|q|_p<|k|_p$, using the strong triangle inequality from \eqref{f2}
we obtain $|\eta-k|_p<|k|_p$.
Furthermore, from \eqref{yordam} it follows that
\begin{eqnarray}\label{f4}
f_{\theta,q,k}(x)&=&\left(1-\frac{q}{\eta}\right)^k+O\left[\frac{\theta-1}{\eta}\right]\nonumber\\[2mm]
&=&\left(1-\frac{q}{k}\sum_{n=0}^\infty\left(\frac{k-\eta}{k}\right)^n\right)^k+O\left[\frac{\theta-1}{\eta}\right]\nonumber\\[2mm]
&=&1-q\sum_{n=0}^\infty\left(\frac{k-\eta}{k}\right)^n+\frac{(k-1)q^2}{2k}-\frac{(k-1)(k-2)q^3}{6k^2}+o\left[\frac{q^2}{k}\right]\nonumber\\[2mm]
&=&1-q+\frac{q}{k}\left(\eta-k+\frac{(k-1)q}{2}-\frac{(k-1)(k-2)q^2}{6k}\right)+q\sum_{n=2}^\infty\left(\frac{k-\eta}{k}\right)^n+o\left[\frac{q^2}{k}\right]
\end{eqnarray}
On the other hand, from \eqref{f2} one finds
\begin{equation}\label{ghty}
\left|\eta-k+\frac{(k-1)q}{2}-\frac{(k-1)(k-2)q^2}{6k}\right|_p=\left\{
\begin{array}{ll}
|\eta-k|_p, & \mbox{if } |\eta-k|_p>|q|_p,\\
|q|_p, & \mbox{if } |\eta-k|_p\leq|q|_p.
\end{array}\right.
\end{equation}
The last equality together with $|\eta-k|_p<|k|_p$, $|q|_p<|k|_p$ yields
\begin{equation}\label{f3}
\frac{\left|(k-\eta)^2\right|_p}{\left|k^2\right|_p}<\frac{\left|6k(\eta-k)+3k(k-1)q-(k-1)(k-2)q^2\right|_p}{|6k^2|_p}
\end{equation}
Plugging \eqref{f2} and \eqref{f3} into \eqref{f4} one finds
$$
|f_{\theta,q,k}(x)-1+q|_p\geq\frac{|q^2|_p}{|k|_p},
$$
which with $|k|_p\leq1$, $|\theta-1|_p<|q^2|_p$ implies
$|f_{\theta,q,k}(x)-2+q+\theta|_p>|\theta-1|_p$. Hence, thanks to Proposition \ref{pro_limits} we obtain
that $f_{\theta,q,k}^n(x)\to1$ as $n\to\infty$. This completes the proof.
\end{proof}

Let us define
\begin{equation}\label{juliaset}
J_{f_{\theta,q,k}}=\bigcap\limits_{n=1}^\infty f_{\theta,q,k}^{-n}(X).
\end{equation}

\begin{proof}[Proof of Theorem \ref{thm_mainasos}: $(B)$] Due to Proposition \ref{prop_asosiy} the set $\mathcal P_{x^{(\infty)}}$ can not
belong to $Dom(f_{\theta,q,k})\setminus X$. Then we get $\mathcal P_{x^{(\infty)}}\subset X$. According
to the construction of $J_{f_{\theta,q,k}}$, we conclude that $J_{f_{\theta,q,k}}\cap\mathcal P_{x^{(\infty)}}=\emptyset$.
On the other hand, $J_{f_{\theta,q,k}}$ is an invariant w.r.t. $f_{\theta,q,k}$. Then for any
$x\not\in J_{f_{\theta,q,k}}\cup\mathcal P_{x^{(\infty)}}$ there exists a number $m\geq1$ such that
$f_{\theta,q,k}^m(x)\not\in X$. Hence, due to Proposition
\ref{prop_asosiy} we infer that $f_{\theta,q,k}^n(x)\to1$ as $n\to\infty$.
The proof is complete.
\end{proof}

\section{Proof of Theorem \ref{thm_mainasos}: parts $(B1)$ and $(B2)$}

In the sequel, we need some auxiliary facts.

\begin{lemma}\label{lem_yanadayaxshi}
Let $p\geq3$ and $|k|_p>|q|_p$. Then for any $a\in B_{|q^2|_p}(1-q)$ the equation
$x^k=a$ has a unique solution $x_*$ on $\mathcal E_p$. Moreover, this solution
satisfies
\begin{equation}\label{x_*}
x_*-1+\frac{q}{k}+\frac{(k-1)q^2}{2k^2}-\frac{(k-1)(k-2)q^3}{6k^3}=o\left[\frac{q^2}{k^2}\right].
\end{equation}
\end{lemma}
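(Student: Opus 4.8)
The plan is to produce the solution explicitly via the $p$-adic exponential, and then to pin down its expansion by comparing it with the candidate right-hand side of \eqref{x_*}.

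First I would record the ambient estimates. Since $q$ is divisible by $p$ we have $|q|_p\le p^{-1}<p^{-1/(p-1)}$ for $p\ge3$, and $a\in B_{|q^2|_p}(1-q)$ forces $|a-1|_p=|q|_p$, so $a\in\mathcal E_p$. By Lemma \ref{epproperty}(d) I write $a=\exp_p(h)$ with $|h|_p=|a-1|_p=|q|_p$. The hypothesis $|k|_p>|q|_p$ gives $|h/k|_p=|q|_p/|k|_p\le p^{-1}<p^{-1/(p-1)}$, so $x_*:=\exp_p(h/k)$ is well defined, lies in $\mathcal E_p$, and (using $\exp_p(v)^k=\exp_p(kv)$) satisfies $x_*^k=\exp_p(h)=a$; this yields existence. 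For uniqueness, if $x_1,x_2\in\mathcal E_p$ both solve $x^k=a$, then Corollary \ref{coralb} gives $|x_1^k-x_2^k|_p=|k|_p\,|x_1-x_2|_p$, whence $x_1=x_2$.

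For the expansion I would introduce the candidate
$$\tilde x=1-\frac{q}{k}-\frac{(k-1)q^2}{2k^2}+\frac{(k-1)(k-2)q^3}{6k^3},$$
which lies in $\mathcal E_p$ since $|\tilde x-1|_p=|q/k|_p$. Applying Corollary \ref{coralb} to the pair $x_*,\tilde x$ gives $|x_*-\tilde x|_p=|x_*^k-\tilde x^k|_p/|k|_p=|a-\tilde x^k|_p/|k|_p$, so it suffices to prove the single estimate $|\tilde x^k-a|_p<|q^2/k|_p$; since $a=1-q+o[q^2]$ this reduces to showing $\tilde x^k=1-q+o[q^2/k]$. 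I would establish this by the finite expansion $\tilde x^k=(1+y)^k=\sum_{j\ge0}\binom{k}{j}y^j$, $y=\tilde x-1$, tracking the contributions order by order.

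The main obstacle is the error bookkeeping, since several correction terms sit exactly at the threshold $|q^2/k|_p$ and must cancel. Writing $d=v_p(q)-v_p(k)\ge1$ and using $v_p\binom{k}{j}\ge v_p(k)-v_p(j)$, one checks that $\binom{k}{j}y^j=o[q^2/k]$ for all $j\ge4$, and also for $j=3$ unless $p=3$ and $d=1$; so only $j=1,2,3$ can contribute at the critical size. The decisive computation is that the $q^2/k$-order parts of $ky$ and $\binom{k}{2}y^2$ cancel exactly, $-\tfrac{(k-1)q^2}{2k}+\tfrac{(k-1)q^2}{2k}=0$, while the $q^3/k^2$-order parts of $ky$, $\binom{k}{2}y^2$ and $\binom{k}{3}y^3$ combine to $\tfrac{(k-1)^2q^3}{2k^2}$, which is $o[q^2/k]$ because $|q^3/k^2|_p=|q/k|_p\,|q^2/k|_p<|q^2/k|_p$. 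The genuinely delicate point is the case $p=3$, $d=1$: there $|2|_p=1$ but $|6|_p=|3|_p<1$ inflate the magnitudes so that these correction terms actually reach the threshold, and it is precisely the $q^2$- and $q^3$-coefficients chosen in $\tilde x$ (equivalently, in \eqref{x_*}) that force the exact cancellation. Once these cancellations are in place, $\tilde x^k=1-q+o[q^2/k]$ follows, and the displayed estimate \eqref{x_*} is obtained.
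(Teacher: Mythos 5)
Your proof is correct, and it differs from the paper's in a meaningful way on the existence step. The paper constructs the root by a Newton-type iteration $x_{n+t}=x_{n+t-1}+\frac{a-x_{n+t-1}^k}{k}$, shows the iterates stay in $\mathcal E_p$ and that $|x_n^k-a|_p$ drops below $|(a-1)^2|_p$, and then invokes Hensel's Lemma; you instead write $a=\exp_p(h)$ with $|h|_p=|a-1|_p=|q|_p$ (legitimate since $|q|_p\le p^{-1}<p^{-1/(p-1)}$ for $p\ge3$) and take $x_*=\exp_p(h/k)$, which is shorter and makes the root completely explicit, at the cost of relying on the isometry and homomorphism properties of $\exp_p$ rather than only on Hensel. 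Uniqueness is handled identically in both arguments via Corollary \ref{coralb}. For the expansion, the two proofs are near-mirror images: the paper posits $x_*=1-\frac{q}{k}+\alpha_*$ with $\alpha_*=o[q/k]$, expands $x_*^k$ binomially and solves for $\alpha_*$, whereas you plug in the full candidate $\tilde x$ and verify $\tilde x^k=1-q+o[q^2/k]$, then transfer back through the exact Lipschitz constant $|k|_p$ of $x\mapsto x^k$ on $\mathcal E_p$. Your bookkeeping is sound: the cancellation of the $\frac{(k-1)q^2}{2k}$ terms between $ky$ and $\binom{k}{2}y^2$, the residual $\frac{(k-1)^2q^3}{2k^2}=o[q^2/k]$, the bound $v_p\binom{k}{j}\ge v_p(k)-v_p(j)$ for the tail $j\ge4$, and the observation that only when $p=3$ and $v_p(q)-v_p(k)=1$ does the cubic term genuinely reach the threshold (which is exactly why the $q^3$-coefficient in \eqref{x_*} is needed) are all accurate. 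The "verify the candidate" formulation arguably makes it clearer why those particular coefficients appear, while the paper's "solve for $\alpha_*$" formulation shows how one would discover them.
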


\begin{proof} Let $|k|_p>|q|_p$ and $a\in B_{|q^2|_p}(1-q)$.
For convenience, we use the canonical form of $a$:
$$
a=1+a_tp^t+a_{t+1}p^{t+1}+\dots
$$
We note that $|k|_p>p^{-t}$.  Let us put  $x_t=1$ and define a sequence
$\{x_{n+t-1}\}_{n\geq1}$ as follows
\begin{equation}\label{p1p1}
x_{n+t}=x_{n+t-1}+\frac{a-x_{n+t-1}^k}{k}.
\end{equation}
First, by induction, let us show that $x_{n+t-1}\in\mathcal E_p$ for any $n\geq1$. It is clear that $x_t\in\mathcal E_p$, and therefore,
we assume that $x_{n+t-1}\in\mathcal E_p$ for some $n\geq1$. Then due to Corollary
\ref{coralb} we obtain
$$
x_{n+t-1}^k-1=k(x_{n+t-1}-1)+o[k(x_{n+t-1}-1)],
$$
which is equivalent to
$$
\left|x_{n+t-1}^k-1\right|_p<|k|_p.
$$
The last inequality together with $|a-1|_p<|k|_p$ implies that $|x_{n+t}-x_{n+t-1}|_p<1$.
Consequently, from $x_{n+t-1}\in\mathcal E_p$ we find $x_{n+t}\in\mathcal E_p$. So, we conclude that
$x_{n+t}\in\mathcal E_p$ for any $n\geq1$.

Due to Corollary \ref{coralb}, from \eqref{p1p1} we have
\begin{eqnarray*}
x_{n+t}^k-x_{n+t-1}^k&=&k(x_{n+t}-x_{n+t-1})+o[k(x_{n+t}-x_{n+t-1})]\\
&=&a-x_{n+t-1}^k+o[a-x_{n+t-1}],
\end{eqnarray*}
which means
$$
\left|x_{n+t}^k-a\right|_p<\left|x_{n+t-1}^k-a\right|_p.
$$
Hence, there exists a number $n_0\geq1$ such that
$$\left|x_{n_0+t}^k-a\right|_p\leq|(a-1)^2|_p.$$ Consider a polynomial $F(x)=x^k-a$.
It is easy to check that 
$$
|F'(x_{n_0+t-1})|_p=|k|_p, \ \ \textrm{and} \ \  |F(x_{n_0+t-1})|_p\leq|(a-1)^2|_p.$$
So, from $|k^2|_p>|(a-1)^2|_p$, thanks
to
Hensel's Lemma we conclude that $F$ has a solution $x_*$ such that
$$|x_*-x_{n_0+t-1}|_p\leq|(a-1)^2|_p.$$ From $x_{n_0+t-1}\in\mathcal E_p$ we infer
that $x_*\in\mathcal E_p$. The uniqueness of solution on $\mathcal E_p$ immediately
follows from Corollary \ref{coralb}.

Suppose that $x_*\in\mathcal E_p$ is a solution of $x^k-a=0$. Let us show that it can be represented by \eqref{x_*}.
Due to Corollary
\ref{coralb} the solution $x_*$ has the following form
\begin{equation}\label{x_*al}
x_*=1-\frac{q}{k}+\alpha_*
\end{equation}
where $\alpha_*=o\left[\frac{q}{k}\right]$.
Furthermore,
\begin{eqnarray*}
a=x_*^k&=&1+k\left(-\frac{q}{k}+\alpha_*\right)+\frac{k(k-1)}{2}\left(-\frac{q}{k}+\alpha_*\right)^2\\[2mm]
&&+\frac{k(k-1)(k-2)}{6}
\left(-\frac{q}{k}+\alpha_*\right)^3+o\left[\frac{q^2}{k}\right]\\[2mm]
&=&1-q+k\alpha_*+\frac{(k-1)q^2}{2k}-\frac{(k-1)(k-2)q^3}{6k^2}+o\left[\frac{q^2}{k}\right]\\[2mm]
\end{eqnarray*}
From $a=1-q+o[q^2]$ we obtain
$$
k\alpha_*+\frac{(k-1)q^2}{2k}-\frac{(k-1)(k-2)q^3}{6k^2}=o\left[\frac{q^2}{k}\right]
$$
which implies that
$$
\alpha_*=-\frac{(k-1)q^2}{2k^2}+\frac{(k-1)(k-2)q^3}{6k^3}+o\left[\frac{q^2}{k^2}\right]
$$
Putting the last one into \eqref{x_*al} one gets \eqref{x_*}, which completes the proof.
\end{proof}

\begin{rk}
Thanks to Lemma \ref{lem_yanadayaxshi}, every $a\in\mathcal E_p$ with $|a-1|_p<|k|_p$, the equation $x^k=a$
has a single root belonging to $\mathcal E_p$, which is called {\it the principal $k$-th root}
and it is denoted by $\sqrt[k]{a}$. In what follows, when we write $\sqrt[k]{a}$ for given
$a\in\mathcal E_p$
we always mean the principal $k$-th root of $a$.
\end{rk}

\begin{rk}
We point out that in \cite{MS13} the existence (only) of solutions of the equation $x^k=a$ on $\mathbb Z^*_p$ has been obtained, but an advantage of  Lemma \ref{lem_yanadayaxshi} is that it provides uniqueness of the solution in $\ce_p$ with its explicit  expression which is essential in our investigation.
\end{rk}

On the set $X$ (see \eqref{X}) the mapping $f_{\theta,q,k}$ has exactly $\kappa_p$ inverse branches:
$$
h_{i}(x)=\frac{(q+\theta-2)\xi_i\sqrt[k]{x}-q+1}{\theta-\xi_i\sqrt[k]{x}},\ \ \xi_i^k=1,\ \ i\in\{1,\dots,\kappa_p\}.
$$

\begin{pro}\label{pro_teskari} Let $p\geq3$ and $|k|_p>|q|_p$. If $|\theta-1|_p<|q^2|_p$ then
\begin{equation}\label{t1}
h_{i}(X)\subset B_r(x_{\xi_i}).
\end{equation}
\end{pro}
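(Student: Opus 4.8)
The plan is to write $\varepsilon:=\theta-1$, express each inverse branch through the principal root $u:=\sqrt[k]{x}$, and collapse $h_i$ to a single fraction. First I would verify the root is defined on all of $X$: if $x\in B_r(x_{\xi_j})$ then $|x-(1-q)|_p<|q^2|_p$ (since $r=|q\varepsilon|_p<|q^3|_p$ and each $|x_{\xi_j}-(1-q)|_p\le|\varepsilon|_p<|q^2|_p$), so $x\in B_{|q^2|_p}(1-q)$ with $|x-1|_p=|q|_p$; as $p\ge3$ gives $|q|_p\le p^{-1}<p^{-1/(p-1)}$ we have $x\in\mathcal E_p$, and $|q|_p<|k|_p$ lets Lemma \ref{lem_yanadayaxshi} apply with the expansion \eqref{x_*}, uniformly over $X$. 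Setting $w=\xi_i u$ and $s=1-w$, a direct manipulation gives the unified identity
\begin{equation*}
h_i(x)-(1-q)=\frac{\varepsilon(q-s)}{s+\varepsilon},\qquad s=1-\xi_i\sqrt[k]{x}.
\end{equation*}
Since $x_{\xi_i}-(1-q)$ is a multiple of $\varepsilon$ in both cases of \eqref{x_i}, it then suffices to show $|D_i|_p<r=|q\varepsilon|_p$, where $D_i:=h_i(x)-x_{\xi_i}=\bigl(h_i(x)-(1-q)\bigr)-\bigl(x_{\xi_i}-(1-q)\bigr)$.

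For $\xi_i\neq1$ I would use only $u=1-q/k+o[q/k]$, so $s=(1-\xi_i)+\delta$ with $\delta=\xi_i(1-u)$, $|\delta|_p=|q/k|_p$, while $|s|_p=|s+\varepsilon|_p=|1-\xi_i|_p=1$ (the nontrivial solutions of $x^k=1$ in $\mathbb Z_p^*$ are units $\not\equiv1\pmod p$). Clearing denominators against $x_{\xi_i}-(1-q)=\varepsilon\bigl(\tfrac{q}{1-\xi_i}-1\bigr)$ reduces this to
\begin{equation*}
D_i=\varepsilon\cdot\frac{-q\delta+(1-\xi_i-q)\varepsilon}{(1-\xi_i)(s+\varepsilon)},
\end{equation*}
whose numerator has norm $|q\delta|_p=|q^2/k|_p$ (dominating $|\varepsilon|_p<|q^2|_p$); hence $|D_i|_p=|\varepsilon q^2/k|_p<|q\varepsilon|_p=r$, the final inequality being exactly $|q|_p<|k|_p$.

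The delicate case is $\xi_i=1$, where $s=1-u$ is small, $|s|_p=|q/k|_p$, so the second-order term of the root genuinely enters. Here I would first note $|\varepsilon/s|_p<|qk|_p\le|q|_p$, so the $\varepsilon$ in the denominator costs only $o[q]$ and $\frac{q-s}{s+\varepsilon}=\frac{q}{s}-1+o[q]$. Feeding in $s=\frac{q}{k}+\frac{(k-1)q^2}{2k^2}+o[q^2/k^2]$ from \eqref{x_*} (after dividing by $q/k$, both the cubic term of \eqref{x_*} and its $o[q^2/k^2]$ error contribute only $o[q/k]$) yields $\frac{q}{s}=k-\frac{(k-1)q}{2}+o[q]$, i.e. $\frac{q-s}{s+\varepsilon}=(k-1)\bigl(1-\frac{q}{2}\bigr)+o[q]$. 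Comparing with $x_{\xi_1}-(1-q)=(k-1)\bigl(1-\frac{q}{2}+\frac{(k-2)q^2}{6k}\bigr)\varepsilon$ leaves $D_1=\varepsilon\bigl(o[q]-\frac{(k-1)(k-2)q^2}{6k}\bigr)$, and since $|q^2/k|_p<|q|_p$ the bracket is $o[q]$, so $|D_1|_p<|q\varepsilon|_p=r$.

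The main obstacle is exactly this last case: one must carry the principal $k$-th root to second order and check that every discarded quantity --- the cubic term of \eqref{x_*}, the $o[q^2/k^2]$ error in \eqref{x_*}, the $\varepsilon$-perturbation of the denominator, and the quadratic tail of $x_{\xi_1}$ --- becomes $o[q]$ after multiplication by $\varepsilon$, which rests throughout on $|q/k|_p<1$ and holds whether or not $p\mid k$. Finally I would stress that all estimates are uniform in $x$ over the whole of $X$ and not merely over $B_r(x_{\xi_i})$, since the expansion \eqref{x_*} is valid on all of $B_{|q^2|_p}(1-q)\supset X$; this uniformity is what forces each branch $h_i$ into the single ball $B_r(x_{\xi_i})$ and ultimately produces the full-shift structure.
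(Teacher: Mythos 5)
Your proof is correct and follows essentially the same route as the paper's: both reduce $h_i(x)-x_{\xi_i}$ to an explicit rational expression in $\sqrt[k]{x}$ and estimate its norm via the expansion \eqref{x_*} from Lemma \ref{lem_yanadayaxshi}, splitting into the cases $\xi_i=1$ and $\xi_i\neq1$. Your preliminary checks (that $X\subset B_{|q^2|_p}(1-q)$ so the principal root is defined uniformly on all of $X$, and that $|1-\xi_i|_p=1$ for $\xi_i\neq1$) usefully make explicit some points the paper leaves implicit, but the substance of the argument is the same.
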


\begin{proof} Let $x\in X$. We consider two cases: $\xi_i=1$ and $\xi_i\neq1$.\\
{\bf Case $\xi_i=1$.} In this case, we have
\begin{eqnarray}
h_{i}(x)-x_{\xi_i}&=&\frac{(q+\theta-2)\sqrt[k]{x}-q+1}{\theta-\sqrt[k]{x}}-
\left(1-q+(k-1)\left(1-\frac{q}{2}+\frac{(k-2)q^2}{6k}\right)(\theta-1)\right)\nonumber\\[2mm]
&=&\frac{(\theta-1)\left(q+\theta-1+\left(k-\frac{(k-1)q}{2}+\frac{(k-1)(k-2)q^2}{6k}\right)(\sqrt[k]{x}-\theta)\right)}{\theta-\sqrt[k]{x}}\nonumber\\[2mm]
\label{t4}&=&\frac{(\theta-1)\left(q+\left(k-\frac{(k-1)q}{2}+\frac{(k-1)(k-2)q^2}{6k}\right)(\sqrt[k]{x}-1)+o[q^2]\right)}{1-\sqrt[k]{x}+o[q^2]}
\end{eqnarray}
On the other hand, due to Lemma \ref{lem_yanadayaxshi} we have
\begin{equation}\label{t2}
\sqrt[k]{x}-1=-\frac{q}{k}-\frac{(k-1)q^2}{2k^2}+\frac{(k-1)(k-2)q^3}{6k^3}+o\left[\frac{q^2}{k^2}\right]
\end{equation}
Furthermore, keeping in mind $|q|_p<|k|_p$, we can easily check the following
\begin{equation}\label{t3}
q+\left(k-\frac{(k-1)q}{2}+\frac{(k-1)(k-2)q^2}{6k}\right)(\sqrt[k]{x}-1)=o\left[\frac{q^2}{k}\right]
\end{equation}
Plugging \eqref{t2},\eqref{t3} into \eqref{t4} one has
\begin{eqnarray*}
h_{i}(x)-x_{\xi_i}&=&
\frac{(\theta-1)o\left[\frac{q^2}{k}\right]}{O\left[\frac{q}{k}\right]}\\[2mm]
&=&(\theta-1)o[q]\\[2mm]
&=&o[q(\theta-1)].
\end{eqnarray*}
This means $h_{i}(x)\in B_r(x_{\xi_i})$. Arbitrariness of
$x\in X$ yields \eqref{t1}.\\
{\bf Case $\xi_i\neq1$.} Then, we have
\begin{eqnarray*}
h_{i}(x)-x_{\xi_i}&=&\frac{(q+\theta-2)\xi_i\sqrt[k]{x}-q+1}{\theta-\xi_i\sqrt[k]{x}}-\left(2-q-\theta+\frac{q}{1-\xi_i}(\theta-1)\right)\\[2mm]
&=&\frac{(\theta-1)\left(q-\frac{\theta q}{1-\xi_i}+\frac{\xi_i\sqrt[k]{x}q}{1-\xi_i}+\theta-1\right)}{\theta-1+1-\xi_i\sqrt{x}}\\[2mm]
&=&\frac{(\theta-1)\left(q-\frac{q}{1-\xi_i}+\frac{\xi_iq}{1-\xi_i}+o[q]\right)}{O[1]}\\[2mm]
&=&\frac{(\theta-1)o[q]}{O[1]}\\[2mm]
&=&o[q(\theta-1)]
\end{eqnarray*}
The last one implies $h_{i}(x)\in B_r(x_{\xi_i})$. Again due to thenarbitrariness
of $x\in X$ we obtain \eqref{t1}.
\end{proof}

\begin{cor}\label{cor_1234}
Let $p\geq3$ and $|k|_p>|q|_p$. If $|\theta-1|_p<|q^2|_p<1$ and $X$ is a set given by \eqref{X} then
the following statements hold:
\begin{enumerate}
\item[$(i)$] $f_{\theta,q,k}^{-1}(X)\subset X$;\\
\item[$(ii)$] $B_r({x}_{\xi_i})\subset f_{\theta,q,k}(B_{r}(x_{\xi_j}))$ for any $i,j\in\{1,\dots,\kappa_p\}$.
\end{enumerate}
\end{cor}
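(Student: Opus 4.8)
The plan is to prove the two claims of Corollary \ref{cor_1234} as direct consequences of the inverse-branch analysis carried out in Proposition \ref{pro_teskari}, together with the expansiveness of $f_{\theta,q,k}$ on each ball $B_r(x_{\xi_i})$. The key observation is that on the set $X$ the mapping $f_{\theta,q,k}$ has exactly $\kappa_p$ inverse branches $h_1,\dots,h_{\kappa_p}$ (one for each $k$-th root of unity $\xi_i$), and Proposition \ref{pro_teskari} already tells us precisely where each branch sends $X$.

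For part $(i)$ I would argue as follows. Take any $y\in f_{\theta,q,k}^{-1}(X)$, so that $f_{\theta,q,k}(y)\in X$. Since $f_{\theta,q,k}(y)=\bigl(g_{\theta,q}(y)\bigr)^k$ lies in $X\subset\mathcal E_p$, the point $g_{\theta,q}(y)$ must be $\xi_i\sqrt[k]{f_{\theta,q,k}(y)}$ for exactly one root of unity $\xi_i\in Sol_p(x^k-1)$, where $\sqrt[k]{\cdot}$ is the principal root of Lemma \ref{lem_yanadayaxshi}. Inverting the Möbius map $g_{\theta,q}$ then gives $y=h_i\bigl(f_{\theta,q,k}(y)\bigr)$, so $y\in h_i(X)$. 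By Proposition \ref{pro_teskari} we have $h_i(X)\subset B_r(x_{\xi_i})\subset X$, hence $y\in X$. This shows $f_{\theta,q,k}^{-1}(X)\subset X$, and in fact yields the sharper inclusion $f_{\theta,q,k}^{-1}(X)\subset\bigcup_{i=1}^{\kappa_p} h_i(X)$ which is what part $(ii)$ needs.

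For part $(ii)$ I would combine the above with the expansiveness/isometry-up-to-scaling property. Fix $i,j$. The branch $h_i$ is a bijection from $X$ onto $h_i(X)\subset B_r(x_{\xi_i})$, and it is the inverse of $f_{\theta,q,k}$ restricted to $B_r(x_{\xi_i})$, so $f_{\theta,q,k}\bigl(h_i(X)\bigr)=X\supset B_r(x_{\xi_j})$. The remaining point is to upgrade the inclusion $h_i(X)\subset B_r(x_{\xi_i})$ to an equality $h_i(X)=B_r(x_{\xi_i})$, equivalently $f_{\theta,q,k}\bigl(B_r(x_{\xi_i})\bigr)=X$. Here I would invoke the fact that on $B_r(x_{\xi_i})$ the map $f_{\theta,q,k}$ scales distances by a fixed factor $p^{\tau_i}$ with $\tau_i>0$, so it maps $B_r(x_{\xi_i})$ \emph{onto} a ball of radius $p^{\tau_i}r$; by the ultrametric property and the estimates already established this image ball is exactly $X$ (more precisely, it contains each $B_r(x_{\xi_j})$). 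Consequently $B_r(x_{\xi_j})\subset f_{\theta,q,k}\bigl(B_r(x_{\xi_i})\bigr)$ for every pair $(i,j)$, which is the assertion of $(ii)$.

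\textbf{The main obstacle} I expect lies in part $(ii)$: establishing the exact expansion factor $\tau_i$ of $f_{\theta,q,k}$ on each ball $B_r(x_{\xi_i})$ and verifying that the image ball is large enough to engulf \emph{every} $B_r(x_{\xi_j})$, not merely the one with matching index. This requires differentiating (or directly estimating the finite-difference quotient of) $f_{\theta,q,k}=g_{\theta,q}^k$ at the centers $x_{\xi_i}$, using $|\theta-1|_p<|q^2|_p$ and $|k|_p>|q|_p$ to pin down $|f_{\theta,q,k}(x)-f_{\theta,q,k}(y)|_p=p^{\tau_i}|x-y|_p$ for a \emph{positive} integer $\tau_i$ common to all branches. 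The explicit form \eqref{x_i} of the centers and the principal-root expansion \eqref{t2} should make this a bounded computation via Corollary \ref{coralb}, but it is the step where one genuinely needs the hypothesis $|\theta-1|_p<|q^2|_p$ and where the non-Archimedean "open mapping" behavior (balls mapping onto balls) must be invoked rather than assumed.
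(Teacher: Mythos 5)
Your derivation is correct and is precisely how the paper intends the corollary to follow (it is stated there without proof): part $(i)$ is $f_{\theta,q,k}^{-1}(X)=\bigcup_{i}h_i(X)\subset\bigcup_i B_r(x_{\xi_i})=X$ by Proposition \ref{pro_teskari}, and part $(ii)$ is the chain $B_r(x_{\xi_i})\subset X=f_{\theta,q,k}\bigl(h_j(X)\bigr)\subset f_{\theta,q,k}\bigl(B_r(x_{\xi_j})\bigr)$, which you already write down in your first two sentences on $(ii)$. The ``upgrade to the equality $f_{\theta,q,k}(B_r(x_{\xi_i}))=X$'' that you single out as the main obstacle is therefore not needed, and that equality in fact fails when $\kappa_p\geq 2$ (by Proposition \ref{pro_vashshe} the image of $B_r(x_{\xi_i})$ has diameter $|q^2/k|_p$, resp.\ $|k|_p$, which strictly exceeds the diameter $|k(\theta-1)|_p$ of $X$ since $|\theta-1|_p<|q^2|_p$), so you should simply delete that digression rather than attempt to prove it.
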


\begin{pro}\label{pro_vashshe}
Let $p\geq3$, $|k|_p>|q|_p$. If $|\theta-1|_p<|q^2|_p$ and $X$ be set given by \eqref{X} then
the following statements hold:\\
\begin{enumerate}
\item[$(a)$] if $\xi_i=1$ then
\begin{equation}\label{fx-fy1}
|f_{\theta,q,k}(x)-f_{\theta,q,k}(y)|_p=\frac{|q(x-y)|_p}{|k(\theta-1)|_p},\
\ \ \mbox{for any }x,y\in B_r(x_{\xi_i}).
\end{equation}
\item[$(b)$] if $\xi_i\neq1$ then
\begin{equation}\label{fx-fy2}
|f_{\theta,q,k}(x)-f_{\theta,q,k}(y)|_p=\frac{|k(x-y)|_p}{|q(\theta-1)|_p},\
\ \ \mbox{for any }x,y\in B_r(x_{\xi_i}).
\end{equation}
\end{enumerate}
\end{pro}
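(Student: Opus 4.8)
The cornerstone of the proof is the exact identity
\[
g_{\theta,q}(x)-g_{\theta,q}(y)=\frac{(\theta-1)(q+\theta-1)(x-y)}{(x+q+\theta-2)(y+q+\theta-2)},
\]
which follows by writing $g_{\theta,q}(x)-1=\frac{(\theta-1)(x-1)}{x+q+\theta-2}$, clearing denominators, and observing that the numerator collapses to $(\theta-1)(q+\theta-1)(x-y)$. Since $f_{\theta,q,k}=(g_{\theta,q})^k$, I would then factor
\[
f_{\theta,q,k}(x)-f_{\theta,q,k}(y)=\bigl(g_{\theta,q}(x)-g_{\theta,q}(y)\bigr)\sum_{j=0}^{k-1}g_{\theta,q}(x)^{k-1-j}g_{\theta,q}(y)^j,
\]
so the whole computation reduces to (i) the $p$-adic norm of $g_{\theta,q}(x)-g_{\theta,q}(y)$, controlled by the identity above, and (ii) the norm of the sum $S=\sum_{j}g_{\theta,q}(x)^{k-1-j}g_{\theta,q}(y)^j$, which I expect to equal $|k|_p$ in both cases.

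Write $D_x=x+q+\theta-2$. The first step is to show that $|D_x|_p$ is constant on each ball $B_r(x_{\xi_i})$. Using the explicit form \eqref{x_i}, a direct substitution gives $D_{x_{\xi_i}}=O[k(\theta-1)]$ when $\xi_i=1$ and $D_{x_{\xi_i}}=\frac{q(\theta-1)}{1-\xi_i}=O[q(\theta-1)]$ when $\xi_i\neq1$ (here $|1-\xi_i|_p=1$ because $\xi_i$ is a root of unity of order prime to $p$). Since $r=|q(\theta-1)|_p$ and $|k|_p>|q|_p$, in both cases $|x-x_{\xi_i}|_p<r\leq|D_{x_{\xi_i}}|_p$, so the ultrametric inequality forces $|D_x|_p=|k(\theta-1)|_p$ when $\xi_i=1$ and $|D_x|_p=|q(\theta-1)|_p$ when $\xi_i\neq1$. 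Feeding this, together with $|q+\theta-1|_p=|q|_p$ (valid since $|\theta-1|_p<|q^2|_p<|q|_p$), into the identity yields
\[
|g_{\theta,q}(x)-g_{\theta,q}(y)|_p=\frac{|q|_p\,|x-y|_p}{|k|_p^2\,|\theta-1|_p}\ \ (\xi_i=1),\qquad
|g_{\theta,q}(x)-g_{\theta,q}(y)|_p=\frac{|x-y|_p}{|q(\theta-1)|_p}\ \ (\xi_i\neq1).
\]

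It remains to evaluate $|S|_p$. When $\xi_i=1$, the same estimates give $g_{\theta,q}(x)-1=O[q/k]$, so $g_{\theta,q}(x),g_{\theta,q}(y)\in\mathcal E_p$; then Corollary \ref{coralb} yields $|f_{\theta,q,k}(x)-f_{\theta,q,k}(y)|_p=|k|_p\,|g_{\theta,q}(x)-g_{\theta,q}(y)|_p$, and multiplying out produces exactly \eqref{fx-fy1}. When $\xi_i\neq1$ the values $g_{\theta,q}(x)$ now cluster near the root of unity $\xi_i$ rather than near $1$, so I would pass to the quotient: writing $S=\xi_i^{k-1}\sum_{j}(g_{\theta,q}(x)/\xi_i)^{k-1-j}(g_{\theta,q}(y)/\xi_i)^j$ and applying Lemma \ref{alpbetgam} with $\alpha=g_{\theta,q}(x)/\xi_i$, $\beta=g_{\theta,q}(y)/\xi_i$ gives $|S|_p=|k|_p$ (since $\xi_i^{k-1}$ is a unit and the resulting $\gamma\in\mathcal E_p$ has norm $1$), whence $|f_{\theta,q,k}(x)-f_{\theta,q,k}(y)|_p=|k|_p\cdot\frac{|x-y|_p}{|q(\theta-1)|_p}$, which is \eqref{fx-fy2}.

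The step I expect to be the main obstacle is justifying that $g_{\theta,q}(x)/\xi_i\in\mathcal E_p$ for all $x\in B_r(x_{\xi_i})$ when $\xi_i\neq1$, since Lemma \ref{alpbetgam} requires both arguments to lie in $\mathcal E_p$. This is precisely where the particular choice of center $x_{\xi_i}$ in \eqref{x_i} is used: a direct substitution gives $g_{\theta,q}(x_{\xi_i})=\xi_i+\frac{(\theta-1)(q-1+\xi_i)}{q}$, and because $|q-1+\xi_i|_p=1$ and $|\theta-1|_p<|q^2|_p$ this forces $|g_{\theta,q}(x_{\xi_i})-\xi_i|_p<|q|_p<p^{-1/(p-1)}$; combining this with the bound $|g_{\theta,q}(x)-g_{\theta,q}(x_{\xi_i})|_p<1$ coming from the displacement identity on the ball yields $|g_{\theta,q}(x)/\xi_i-1|_p<p^{-1/(p-1)}$, as needed. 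Once this membership is secured, both parts follow by the routine norm bookkeeping of Lemma \ref{lemOo}.
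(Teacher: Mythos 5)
Your proposal is correct and follows essentially the same route as the paper: both rest on the exact identity $g_{\theta,q}(x)-g_{\theta,q}(y)=\frac{(\theta-1)(q+\theta-1)(x-y)}{(x+q+\theta-2)(y+q+\theta-2)}$, the observation that $|x+q+\theta-2|_p$ equals $|k(\theta-1)|_p$ (resp.\ $|q(\theta-1)|_p$) on the ball around $x_{\xi_i}$ for $\xi_i=1$ (resp.\ $\xi_i\neq1$), and the extraction of the factor $|k|_p$ from $x^k-y^k$ via Lemma \ref{alpbetgam}. The only cosmetic difference is in case $(b)$: the paper applies Corollary \ref{coralb} to the ratio $g_{\theta,q}(x)/g_{\theta,q}(y)\in\mathcal E_p$, whereas you normalize by the root of unity $\xi_i$ and invoke Lemma \ref{alpbetgam} directly --- the two computations are interchangeable, and your explicit verification that $g_{\theta,q}(x_{\xi_i})=\xi_i+\frac{(\theta-1)(q-1+\xi_i)}{q}$ supplies a detail the paper only asserts through its $O[\cdot]$, $o[\cdot]$ calculus.
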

\begin{proof}
$(a)$ Let $x\in B_r(x_{\xi_i})$, where
$$x_{\xi_i}=1-q+(k-1)\left(1-\frac{q}{2}+\frac{(k-2)q^2}{6k}\right)(\theta-1).$$

Then we have
\begin{eqnarray*}
g_{\theta,q}(x)-1&=&\frac{(\theta-1)(x-1)}{x-2+q+\theta}\\[2mm]
&=&\frac{(\theta-1)(-q+o[q])}{k(\theta-1)+o[k(\theta-1)]}\\[2mm]
&=&O\left[\frac{q}{k}\right]
\end{eqnarray*}
this means $g_{\theta,q}(x)\in\mathcal E_p$. Then for any $x,y\in B_r(x_{\xi_i})$
due to Corollary \ref{coralb} one gets
\begin{equation}\label{y1}
|f_{\theta,q,k}(x)-f_{\theta,q,k}(y)|_p=|k(g_{\theta,q}(x)-g_{\theta,q}(y))|_p
\end{equation}
On the other hand, we have
\begin{eqnarray*}
g_{\theta,q}(x)-g_{\theta,q}(y)&=&\frac{(\theta-1)(q+\theta-1)(x-y)}{(x-2+q+\theta)(y-2+q+\theta)}\\[2mm]
&=&\frac{O[q(\theta-1)](x-y)}{\left(k(\theta-1)+o[k(\theta-1)]\right)^2}\\[2mm]
&=&\frac{O[q](x-y)}{O[k^2(\theta-1)]}
\end{eqnarray*}
Putting the last one into \eqref{y1} one finds \eqref{fx-fy1}.\\
$(b)$ Let $x\in B_r(x_{\xi_i})$, where $x_{\xi_i}=2-q+\theta+\frac{q(\theta-1)}{1-\xi_i}$.
Then 
\begin{eqnarray*}
g_{\theta,q}(x)&=&1+\frac{(\theta-1)(x-1)}{x-2+q+\theta}\\[2mm]
&=&1+\frac{(\theta-1)(-q+o[q])}{\frac{q(\theta-1)}{1-\xi_i}+o[q(\theta-1)]}\\[2mm]
&=&\xi_i+o[1]
\end{eqnarray*}
so,  $|g_{\theta,q}(x)|_p=1$. Moreover, $\frac{g_{\theta,q}(x)}{g_{\theta,q}(y)}\in\mathcal E_p$ for any
$x,y\in B_r(x_{\xi_i})$. Then due to Corollary \ref{coralb} we  have
\begin{equation}\label{y2}
|f_{\theta,q,k}(x)-f_{\theta,q,k}(y)|_p=|k(g_{\theta,q}(x)-g_{\theta,q}(y))|_p
\end{equation}
On the other hand, one finds
\begin{eqnarray*}
g_{\theta,q}(x)-g_{\theta,q}(y)&=&\frac{(\theta-1)(q+\theta-1)(x-y)}{(x-2+q+\theta)(y-2+q+\theta)}\\[2mm]
&=&\frac{O[q(\theta-1)](x-y)}{\left(\frac{q(\theta-1)}{1-\xi_i}+o[q(\theta-1)]\right)^2}\\[2mm]
&=&\frac{(x-y)}{O[q(\theta-1)]}
\end{eqnarray*}
Plugging this into \eqref{y2} we obtain \eqref{fx-fy2}.
This completes the proof.
\end{proof}

\begin{proof}[Proof of Theorem \ref{thm_mainasos}] $(B1)$
Assume that $x^k=1$ has only one solution. Then the set $X$ given by \eqref{X} consists of only
one ball $B_r(x_1)$, where
$$
x_1=1-q+(k-1)(1-\frac{q}{2})(\theta-1),\ \ \ r=|q(\theta-1)|_p.
$$
Thanks to the case $(B)$ we have
$$
A(x_0^*)=Dom(f_{\theta,q,k})\setminus(J_{f_{\theta,q,k}}\cup\mathcal P_{x^{(\infty)}}),
$$
where $x_0^*=1$ and $J_{f_{\theta,q,k}}$ is given by \eqref{juliaset}.
Due to Proposition \ref{pro_vashshe} for any $x,y\in B_r(x_1)$ we find
$$
|f_{\theta,q,k}(x)-f_{\theta,q,k}(y)|_p>p^2|x-y|_p,
$$
which implies that $|J_{f_{\theta,q,k}}|\leq1$. So, it is enough to show
$J_{f_{\theta,q,k}}\neq\emptyset$.

Let us consider the following polynomial
\begin{equation}\label{Fpol}
F(x)=x^k-1+q+(\theta-1)\sum_{j=1}^{k-1}x^{k-j}
\end{equation}
Since $|k|_p>|q|_p$ and $|\theta-1|_p<|q^2|_p$ due to the same argument as Lemma
\ref{lem_yanadayaxshi} the polynomial $F$ has a unique root
$x_{**}$ in $\mathcal E_p$. Furthermore, we obtain
$$
x_{**}^k-2+q+\theta=(\theta-1)\sum_{j=1}^kx_{**}^{k-j},
$$
which yields
$$
\frac{(\theta-1)(x_{**}^k-1)}{x_{**}^k-2+q+\theta}=x_{**}-1.
$$
Denoting $x_*=x_{**}^k$, from the last one, we find
$x_*\in\mathcal E_p\setminus\{1\}$ and $f_{\theta,q,k}(x_*)=x_*$. This means that
$x_*\in J_{f_{\theta,q,k}}$. From  $|J_{f_{\theta,q,k}}|\leq1$ one has $J_{f_{\theta,q,k}}=\{x_*\}$.

$(B2).$ Assume that $x^k=1$ has $\kappa_p$ ($\kappa_p\geq2$) solutions. Consider the set $X$ defined by \eqref{X}.
Then according to Corollary \ref{cor_1234} $(i)$ and Proposition \ref{pro_vashshe} the triple
$(X,J_{f_{\theta,q,k}},f_{\theta,q,k})$ is a $p$-adic repeller. Due to Corollary \ref{cor_1234} $(ii)$, the corresponding incidence matrix $A$, 
has dimension $\kappa_p\times\kappa_p$ and can be written as follows 
$$
A=\left(\begin{array}{llll}
1 & 1 & \dots & 1\\
1 & 1 & \dots & 1\\
\vdots & \vdots & \ddots & \vdots\\
1 & 1 & \dots & 1
\end{array}
\right)
$$
This means that a triple $(X,J_{f_{\theta,q,k}},f_{\theta,q,k})$ is
transitive, hence Theorem \ref{xit} implies  that
the dynamics $(J_{f_{\theta,q,k}},f_{\theta,q,k},|\cdot|_p)$ is topologically conjugate
to the full shift dynamics of $\kappa_p$ symbols.

This completes the proof.
\end{proof}

\appendix

\section{$p$-adic measure}

Let $(X,\cb)$ be a measurable space, where $\cb$ is an algebra of
subsets $X$. A function $\m:\cb\to \bq_p$ is said to be a {\it
$p$-adic measure} if for any $A_1,\dots,A_n\subset\cb$ such that
$A_i\cap A_j=\emptyset$ ($i\neq j$) the equality holds
$$
\mu\bigg(\bigcup_{j=1}^{n} A_j\bigg)=\sum_{j=1}^{n}\mu(A_j).
$$

A $p$-adic measure is called a {\it probability measure} if
$\mu(X)=1$.  A $p$-adic probability measure $\m$ is called {\it
bounded} if $\sup\{|\m(A)|_p : A\in \cb\}<\infty $. For more detail
information about $p$-adic measures we refer to
\cite{K3},\cite{KhN}.

\section{Cayley tree}

Let $\Gamma^k_+ = (V,L)$ be a semi-infinite Cayley tree of order
$k\geq 1$ with the root $x^0$ (whose each vertex has exactly $k+1$
edges, except for the root $x^0$, which has $k$ edges). Here $V$ is
the set of vertices and $L$ is the set of edges. The vertices $x$
and $y$ are called {\it nearest neighbors} and they are denoted by
$l=<x,y>$ if there exists an edge connecting them. A collection of
the pairs $<x,x_1>,\dots,<x_{d-1},y>$ is called a {\it path} from
the point $x$ to the point $y$. The distance $d(x,y), x,y\in V$, on
the Cayley tree, is the length of the shortest path from $x$ to $y$.
$$
W_{n}=\left\{ x\in V\mid d(x,x^{0})=n\right\}, \ \
V_n=\overset{n}{\underset{m=0}{\bigcup}}W_{m}, \ \ L_{n}=\left\{
l=<x,y>\in L\mid x,y\in V_{n}\right\}.
$$
The set of direct successors of $x$ is defined by
$$
S(x)=\left\{ y\in W_{n+1}:d(x,y)=1\right\}, x\in W_{n}.
$$
Observe that any vertex $x\neq x^{0}$ has $k$ direct successors and
$x^{0}$ has $k+1$.

\section{$p$-adic quasi Gibbs measure}

In this section we recall the definition of  $p$-adic quasi Gibbs
measure (see \cite{M12}).

Let $\Phi=\{1,2,\cdots,q\}$, here $q\geq 2$, ($\Phi$ is called a
{\it state space}) and is assigned to the vertices of the tree
$\G^k_+=(V,\Lambda)$. A configuration $\s$ on $V$ is then defined as
a function $x\in V\to\s(x)\in\Phi$; in a similar manner one defines
configurations $\s_n$ and $\w$ on $V_n$ and $W_n$, respectively. The
set of all configurations on $V$ (resp. $V_n$, $W_n$) coincides with
$\Omega=\Phi^{V}$ (resp. $\Omega_{V_n}=\Phi^{V_n},\ \
\Omega_{W_n}=\Phi^{W_n}$). One can see that
$\Om_{V_n}=\Om_{V_{n-1}}\times\Om_{W_n}$. Using this, for given
configurations $\s_{n-1}\in\Om_{V_{n-1}}$ and $\w\in\Om_{W_{n}}$ we
define their concatenations  by
$$
(\s_{n-1}\vee\w)(x)= \left\{
\begin{array}{ll}
\s_{n-1}(x), \ \ \textrm{if} \ \  x\in V_{n-1},\\
\w(x), \ \ \ \ \ \ \textrm{if} \ \ x\in W_n.\\
\end{array}
\right.
$$
It is clear that $\s_{n-1}\vee\w\in \Om_{V_n}$.

The (formal) Hamiltonian of $p$-adic Potts model is
\begin{equation}\label{ph}
H(\sigma)=J\sum_{\langle x,y\rangle\in L}
\delta_{\sigma(x)\sigma(y)},
\end{equation}
where $J\in B(0, p^{-1/(p-1)})$ is a coupling constant, and
$\delta_{ij}$ is the Kroneker's symbol.

A construct of a  generalized $p$-adic quasi Gibbs measure
corresponding to the model is given below.

Assume that  $\h: V\setminus\{x^{(0)}\}\to\bq_p^{\Phi}$ is a
mapping, i.e. $\h_x=(h_{1,x},h_{1,x},\dots,h_{q,x})$, where
$h_{i,x}\in\bq_p$ ($i\in\Phi$) and $x\in V\setminus\{x^{(0)}\}$.
Given $n\in\bn$, we consider a $p$-adic probability measure
$\m^{(n)}_{\h,\rho}$ on $\Om_{V_n}$ defined by
\begin{equation}\label{mu}
\mu^{(n)}_{\h}(\s)=\frac{1}{Z_{n}^{(\h)}}\exp\{H_n(\s)\}\prod_{x\in
W_n}h_{\s(x),x}
\end{equation}
Here, $\s\in\Om_{V_n}$, and $Z_{n}^{(\h)}$ is the corresponding
normalizing factor
\begin{equation}\label{ZN1}
Z_{n}^{(\h)}=\sum_{\s\in\Omega_{V_n}}\exp\{H_n(\s)\}\prod_{x\in
W_n}h_{\s(x),x}.
\end{equation}

In this paper, we are interested in a construction of an infinite
volume distribution with given finite-dimensional distributions.
More exactly, we would like to find a $p$-adic probability measure
$\m$ on $\Om$ which is compatible with given ones $\m_{\h}^{(n)}$,
i.e.
\begin{equation}\label{CM}
\m(\s\in\Om: \s|_{V_n}=\s_n)=\m^{(n)}_{\h}(\s_n), \ \ \ \textrm{for
all} \ \ \s_n\in\Om_{V_n}, \ n\in\bn.
\end{equation}

We say that the $p$-adic probability distributions \eqref{mu} are
\textit{compatible} if for all $n\geq 1$ and $\sigma\in
\Phi^{V_{n-1}}$:
\begin{equation}\label{comp}
\sum_{\w\in\Om_{W_n}}\m^{(n)}_{\h}(\s_{n-1}\vee\w)=\m^{(n-1)}_{\h}(\s_{n-1}).
\end{equation}
 This condition according to the Kolmogorov extension theorem (see \cite{KL}) implies the existence of a unique $p$-adic measure
$\m_{\h}$ defined on $\Om$ with a required condition \eqref{CM}.
Such a measure $\m_{\h}$ is said to be {\it a $p$-adic quasi Gibbs
measure} corresponding to the model \cite{M12,M13}. If one has
$h_x\in\ce_p$ for all $x\in V\setminus\{x^{(0)}\}$, then the
corresponding measure $\m_\h$ is called \textit{$p$-adic Gibbs
measure} (see \cite{MR1}).

By $Q\cg(H)$ we denote the set of all $p$-adic quasi Gibbs measures
associated with functions $\h=\{\h_x,\ x\in V\}$. If there are at
least two distinct generalized $p$-adic quasi Gibbs measures such
that at least one of them is unbounded, then we say that \textit{a
phase transition} occurs.

The following statement describes conditions on $h_x$ guaranteeing
compatibility of $\mu_{\bf h}^{(n)}(\sigma)$.

\begin{thm}\label{comp1}\cite{M12} The measures $\m^{(n)}_{\h}$, $
n=1,2,\dots$ (see \eqref{mu}) associated with $q$-state Potts model
\eqref{ph} satisfy the compatibility condition \eqref{comp} if and
only if for any $n\in \bn$ the following equation holds:
\begin{equation}\label{eq1}
\hat h_{x}=\prod_{y\in S(x)}{\mathbf{F}}(\hat \h_{y},\theta),
\end{equation}
here and below a vector $\hat \h=(\hat h_1,\dots,\hat
h_{q-1})\in\bq_p^{q-1}$ is defined by a vector
$\h=(h_1,h_1,\dots,h_{q})\in\bq_p^{q}$ as follows
\begin{equation}\label{H}
\hat h_i=\frac{h_i}{h_q}, \ \ \ i=1,2,\dots,q-1
\end{equation}
and mapping ${\mathbf{F}}:\bq_p^{q-1}\times\bq_p\to\bq_p^{q-1}$ is
defined by
${\mathbf{F}}(\xb;\theta)=(F_1(\xb;\theta),\dots,F_{q-1}(\xb;\theta))$
with
\begin{equation}\label{eq2}
F_i(\xb;\theta)=\frac{(\theta-1)x_i+\sum\limits_{j=1}^{q-1}x_j+1}
{\sum\limits_{j=1}^{q-1}x_j+\theta}, \ \ \xb=\{x_i\}\in\bq_p^{q-1},
\ \ i=1,2,\dots,q-1.
\end{equation}
\end{thm}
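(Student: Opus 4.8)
The plan is to unwind both sides of the compatibility condition \eqref{comp} using the explicit form \eqref{mu} of $\m^{(n)}_\h$ together with the recursive structure of the Cayley tree, and thereby to reduce the (a priori exponentially many) scalar equations — one for each $\s_{n-1}\in\Om_{V_{n-1}}$ — to the single vector recursion \eqref{eq1}. Writing $\theta=\exp_p(J)$, so that $\exp\{J\delta_{ij}\}$ equals $\theta$ when $i=j$ and $1$ otherwise, I would first split the last generation: the edges of $L_n$ not already in $L_{n-1}$ are exactly the pairs $\langle x,y\rangle$ with $x\in W_{n-1}$ and $y\in S(x)$, and $W_n=\bigsqcup_{x\in W_{n-1}}S(x)$, so for $\s=\s_{n-1}\vee\w$ one obtains the factorization
$$\exp\{H_n(\s)\}\prod_{x\in W_n}h_{\s(x),x}=\exp\{H_{n-1}(\s_{n-1})\}\prod_{x\in W_{n-1}}\prod_{y\in S(x)}\exp\{J\delta_{\s_{n-1}(x)\,\w(y)}\}\,h_{\w(y),y}.$$
Because each $y\in W_n$ carries an independent spin, the sum over $\w\in\Om_{W_n}$ then factorizes over $x\in W_{n-1}$ and $y\in S(x)$, giving $\sum_{\w}(\cdots)=\prod_{x\in W_{n-1}}P_x(\s_{n-1}(x))$, where I set $P_x(i):=\prod_{y\in S(x)}\sum_{s\in\Phi}\exp\{J\delta_{is}\}h_{s,y}$.

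With this, the left-hand side of \eqref{comp} becomes $\tfrac{1}{Z_n^{(\h)}}\exp\{H_{n-1}(\s_{n-1})\}\prod_{x\in W_{n-1}}P_x(\s_{n-1}(x))$, while its right-hand side is $\tfrac{1}{Z_{n-1}^{(\h)}}\exp\{H_{n-1}(\s_{n-1})\}\prod_{x\in W_{n-1}}h_{\s_{n-1}(x),x}$. The factor $\exp\{H_{n-1}(\s_{n-1})\}$ lies in $\ce_p$, hence is a $p$-adic unit and cancels, so \eqref{comp} is equivalent to demanding that $\prod_{x\in W_{n-1}}\frac{P_x(\s(x))}{h_{\s(x),x}}$ be constant as $\s$ ranges over all configurations. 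Since the spins on $W_{n-1}$ are free and may be changed one vertex at a time, this constancy is equivalent to each factor $P_x(i)/h_{i,x}$ being independent of $i\in\Phi$; this single observation yields both implications of the theorem at once. The required normalization identity $Z_n^{(\h)}=Z_{n-1}^{(\h)}\prod_{x\in W_{n-1}}\big(P_x(q)/h_{q,x}\big)$ then follows automatically from \eqref{ZN1} by summing the factorized expression over $\s_{n-1}$, so no extra condition on the $Z_n^{(\h)}$ is needed.

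Finally, ``$P_x(i)/h_{i,x}$ independent of $i$'' is the same as $P_x(i)/P_x(q)=h_{i,x}/h_{q,x}=\hat h_{i,x}$ for every $i$; computing the ratio termwise and dividing numerator and denominator of each factor by $h_{q,y}$ gives $\frac{\sum_s\exp\{J\delta_{is}\}h_{s,y}}{\sum_s\exp\{J\delta_{qs}\}h_{s,y}}=\frac{(\theta-1)\hat h_{i,y}+\sum_{j=1}^{q-1}\hat h_{j,y}+1}{\sum_{j=1}^{q-1}\hat h_{j,y}+\theta}=F_i(\hat\h_y;\theta)$, which is exactly \eqref{eq2}, so the condition reads $\hat h_{i,x}=\prod_{y\in S(x)}F_i(\hat\h_y;\theta)$ for $i=1,\dots,q-1$, i.e. the vector equation \eqref{eq1}. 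The algebra here is routine; the step carrying the real content, and the one I would treat most carefully, is the factorization of the boundary sum over $\Om_{W_n}$, where the tree structure ($W_n=\bigsqcup_{x\in W_{n-1}}S(x)$, with each new edge coupling $y$ only to its unique predecessor) is precisely what splits the sum into independent single-vertex sums. The only genuinely $p$-adic points are that $\exp_p$ is multiplicative on $B(0,p^{-1/(p-1)})$ — so the split of $\exp\{H_n\}$ is legitimate, all partial sums remaining in the domain by the strong triangle inequality — and that $Z_n^{(\h)}$ and the $h_{q,x}$ are nonzero, which makes every division above valid.
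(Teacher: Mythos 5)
Your argument is correct, and it is the standard derivation of this recursion: split $H_n$ over the last generation of edges, factorize the boundary sum over $\Om_{W_n}$ using the tree structure, cancel the unit $\exp\{H_{n-1}\}$, reduce constancy of the product to constancy of each factor $P_x(i)/h_{i,x}$ by varying one spin at a time, and normalize by the $i=q$ component to obtain $F_i$. The paper itself states Theorem \ref{comp1} without proof, citing \cite{M12}, and your proof is essentially the one given there (modulo the implicit nondegeneracy $h_{q,x}\neq 0$, $P_x(q)\neq 0$, which you correctly flag).
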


Let us first observe that the set
$(\underbrace{1,\dots,1,h}_m,1,\dots,1)$ ($m=1,\dots,q-1$) is
invariant for the equation \eqref{eq1}. Therefore, in what follows,
we restrict ourselves to one of such lines, let us say
$(h,1,\dots,1)$.

In \cite{MFKh16} to establish the phase transition, we considered
translation-invariant (i.e. $\h=\{\h_x\}_{x\in V\setminus\{x^0\}}$
such that $\h_x=\h_y$ for all $x,y$) solutions of \eqref{eq1}. Then
the equation \eqref{eq1} reduced to the following one
\begin{equation}\label{eq12}
h=f_\t(h),
\end{equation}
where
\begin{equation}\label{f}
f_\t(x)=\bigg(\frac{\t x+q-1}{x+\t+q-2}\bigg)^k.
\end{equation}

Hence, to establish the existence of the phase transition, when
$k=2$, we showed \cite{MR1} that \eqref{eq12} has three nontrivial
solutions if $q$ is divisible by $p$.  Note that full description of
all solutions of the last equation has been carried out in
\cite{RKh} when $k=2$.

\section*{Acknowledgments}
The present work is supported by the UAEU "Start-Up" Grant, No.
31S259.


\begin{thebibliography}{99}

\bibitem{ALS17} Ahmad M.A.Kh., Liao L.M.  Saburov M. Periodic $p$-adic Gibbs measures of $q$-state Potts model on Cayley tree: the chaos
implies the vastness of $p$-adic Gibbs measures, {\tt Arxiv: 1708.02152}.



\bibitem{B0} Benedetto R., Reduction,
dynamics, and Julia sets of rational functions, \textit{J. Number
Theory}, {\bf 86} (2001), 175–195.

\bibitem{B1} Benedetto R., Hyperbolic maps in $p$-adic dynamics,
Ergod. Th.\& Dynam.Sys. {\bf 21}(2001), 1--11.
%

\bibitem{Borevich} Borevich Z.I., Shafarevich I.R., Number Theory, {\it Academic Press, New York,} 1966.


\bibitem{DS} Diao H.,  Silva C. E., Digraph representations of rational functions over the $p$-adic numbers,
\textit{p-Adic Numbers, Ultametric Anal. Appl.}, {\bf 3} (2011), 23--38.

\bibitem{FTC} Gyorgyi G., Kondor I., Sasvari L., Tel T. \textit{Phase transitions to
chaos}, World Scientific, Singapore, 1992.


\bibitem{FL2} Fan A.H.,  Liao L.M.,  Wang Y.F., Zhou D., $p$-adic
repellers in $Q_p$ are subshifts of finite type, \textit{C. R. Math.
Acad. Sci Paris}, {\bf 344} (2007), 219--224.

\bibitem{FFLW}  Fan A. H., Fan S. L. ,  Liao  L. M.,  Wang Y.F., Minimality of $p$-adic rational maps with
good reduction, \textit{Discrete Cont. Dyn. Sys.}, {\bf 37} (2017), 3161--3182.




\bibitem{KM1} Khamraev M., Mukhamedov F.M., On a class of rational $p$-adic
dynamical systems, \textit{Jour. Math. Anal. Appl.} {\bf 315}(2006),
76--89.

\bibitem{K3} Khrennikov A.Yu.  $p$-adic valued probability measures, \textit{Indag. Mathem. N.S.}
{\bf 7}(1996) 311--330.





\bibitem{KL} Khrennikov A.Yu., Ludkovsky S.  Stochastic processes on
non-Archimedean spaces with values in non-Archimedean fields,
\textit{Markov Process. Related Fields}
{\bf 9}(2003) 131--162.

\bibitem{KhN} Khrennikov  A.Yu., Nilsson M.  \textit{$p$-adic deterministic and
random dynamical systems}, Kluwer, Dordreht, 2004.




\bibitem{Monr} Monroe J. L.  Julia sets associated with the Potts model on the Bethe lattice and other recursively solved
systems, \textit{ J. Phys. A: Math. Gen.} {\bf 34} (2001), 6405–6412

\bibitem{M12} Mukhamedov F. A dynamical system approach to phase transitions
$p$-adic Potts model on the Cayley tree of order two, \textit{Rep.
Math. Phys.} {\bf 70} (2012), 385--406.

\bibitem{M13} Mukhamedov F., On dynamical systems and phase
transitions for $Q+1$-state $p$-adic Potts model on the Cayley tree,
\textit{ Math. Phys. Anal. Geom.} {\bf 53} (2013) 49-87.

\bibitem{M15} Mukhamedov F., Renormalization method in $p$-adic
$\lambda$-model on the Cayley tree, \textit{Int. J. Theor. Phys.}
{\bf 54} (2015), 3577--3595.


\bibitem{MAD17}  Mukhamedov F.,  Akin H., Dogan M. On chaotic behavior of
the $p$-adic generalized Ising mapping and its application,
\textit{J. Difference Eqs Appl.} {\bf 23}(2017), 1542--1561.

\bibitem{MFKh161}  Mukhamedov F., Khakimov O. On Periodic Gibbs Measures of $p$-Adic Potts Model
on a Cayley Tree, \textit{$p$-Adic Numbers, Ultr. Anal.Appl.}, {\bf 8}(3), (2016), 225-235.

\bibitem{MFKh16}  Mukhamedov F., Khakimov O. Phase transition and chaos:
$p$-adic Potts model on a Cayley tree, \textit{Chaos, Solitons \&
Fractals } {\bf 87}(2016), 190--196.

\bibitem{MFKh162}   Mukhamedov F., Khakimov O., On metric properties of
unconventional limit sets of contractive non-Archimedean dynamical
systems, \textit{Dynamical Systems} {\bf 31} (2016), 506--524.

\bibitem{MFKhO_ising}  Mukhamedov F., Khakimov O. On Julia set and chaos in $p$-adic Ising model on the
Cayley tree, {\it Math. Phys. Anal. Geom.}  {\bf 20} (2017) 23.

\bibitem{MFKh18}  Mukhamedov F., Khakimov O. Chaotic behavior of the
$p$-adic Potts-Behte
mapping, \textit{Disc. and Cont. Dyn.Syst.} {\bf 38} (2018), 231--245.

\bibitem{MS13} Mukhamedov F., Saburov M. On equation $x^q=a$ over
${\mathbb{Q}}_p$, \textit{J. Number Theor.} {\bf 133} (2013),
55–-58.

\bibitem{MFSMKhO1} Mukhamedov F., Saburov M., Khakimov O., On $p$-adic Ising-–Vannimenus model on
an arbitrary order Cayley tree, {\it J. Stat. Mech.} (2015), P05032

\bibitem{MFSMKhO2} Mukhamedov F., Saburov M., Khakimov O., Translation-invariant $p$-adic
quasi Gibbs measures for the Ising–-Vannimenus model on a Cayley tree, {\it Theor. Math. Phys.}, {\bf 187}(1),
(2016), 583-602.


\bibitem{MR1} Mukhamedov F.M., Rozikov U.A. On Gibbs measures of $p$-adic
Potts model on the Cayley tree, \textit{Indag. Math. N.S.} {\bf 15}
(2004) 85--100.

\bibitem{MR2}  Mukhamedov, F.M., Rozikov, U.A.: On inhomogeneous p-adic Potts model on a Cayley tree, \textit{Infin.
Dimens. Anal. Quantum Probab. Relat. Top.} {\bf 8} (2005), 277?290.





\bibitem{RKh} Rozikov U. A., Khakimov O. N. Description of all translation-invariant
$p$-dic Gibbs measures for the Potts model on a Cayley tree,
\textit{Markov Proces. Rel. Fields} \textbf{21} (2015), 177-–204.

\bibitem{SA15} Saburov M., Ahmad M.A.Kh. On descriptions of all translation invariant $p$-adic Gibbs measures for the Potts model on the
Cayley tree of order three, \textit{ Math. Phys. Anal. Geom.} {\bf
18} (2015) 26.





\end{thebibliography}
\end{document}